\documentclass[12pt,oneside,reqno]{amsart}
\usepackage[all]{xy}
\usepackage{amsfonts,amsmath,oldgerm,amssymb,amscd}
\UseComputerModernTips
\numberwithin{equation}{section}
\usepackage[breaklinks]{hyperref}
\allowdisplaybreaks

\usepackage{enumerate}
\usepackage{caption} 
\newcommand{\Zstroke}{%
  \text{\ooalign{\hidewidth\raisebox{0.2ex}{--}\hidewidth\cr$Z$\cr}}%
}
\newtheorem{theorem}{Theorem}[section]
\newtheorem{proposition}[subsection]{\bf Proposition}

\newtheorem{lemma}[subsection]{{\bf Lemma}}

\newcommand{\m}{\mbox{$\mathfrak m$}}

\begin{document}

	\title[Resolution of the Skolem Problem for $k$-Generalized Lucas Sequences
	]{Resolution of the Skolem Problem for $k$-Generalized Lucas Sequences
	} %\\ \today}

\author[M. Mohapatra]{M. Mohapatra}
\address{Monalisa Mohapatra, Department of Mathematics, National Institute of Technology Rourkela, Odisha-769 008, India}
\email{mmahapatra0212@gmail.com}

\author[P. K. Bhoi]{P. K.  Bhoi}
\address{Pritam Kumar Bhoi, Institute of Mathematics and Applications, Andharua, Bhubaneswar, Odisha -751 029, India}
\email{pritam.bhoi@gmail.com}

\author[G. K. Panda]{G. K. Panda}
\address{Gopal Krishna Panda, Department of Mathematics, National Institute of Technology Rourkela, Odisha-769 008, India}
\email{gkpanda\_nit@rediffmail.com}

\thanks{2020 Mathematics Subject Classification: Primary 11B39, Secondary 11J86, 11D61.\\
	Keywords: Zero multiplicity, Skolem Problem, $k$-Generalized Lucas numbers, Baker-Davenport reduction method, linear forms in logarithms,}

\begin{abstract}
	This paper provides a complete solution to Skolem’s problem for the $k$-generalized Lucas sequence $(L_n^{(k)})_{n \in \mathbb{Z}}$ with a primary focus on its behavior at negative indices. We characterize the zero-distribution of this sequence by identifying and bounding all indices $n < 0$ such that $L_n^{(k)} = 0$. Our central result establishes that the zero-multiplicity $\delta_k$ of the sequence is $(k-1)(k-2)/2$
for all $k.$
\end{abstract}

\maketitle
\pagenumbering{arabic}
\pagestyle{headings}
 \section{Introduction}
The $k$-generalized Lucas sequence $(L_n^{(k)})_{n\geq2-k}$ is a multi-order extension of the standard Lucas sequence for any integer $k \geq 2$. It is governed by the $k$-th order additive recurrence $$L_n^{(k)} = \sum_{j=1}^{k} L_{n-j}^{(k)}, \quad \text{for } n \geq 2,$$ with initial terms $L_0^{(k)} = 2$, $L_1^{(k)} = 1$, and $L_n^{(k)} = 0$ for indices $-(k-2) \leq n \leq -1$. Notably, the case where $k=2$ reverts to the classical Lucas numbers.
The behavior of this sequence is linked to its characteristic polynomial $$\Psi_{k}(x) = x^{k} - 2x^{k-1} - \cdots - x - 1.$$ Research by Wolfarm \cite{Wolfarm} showed that $\Psi_{k}(x)$ is irreducible over the rationals. It has one dominant positive real root $\gamma(k)$ found in the interval $(2(1-2^{-k}), 2)$. The other roots $ \gamma_2, \gamma_3, \ldots, \gamma_k $ are inside the unit circle and satisfy the condition $3^{1/k} < |\gamma_i| <1 $. Let $\gamma = \gamma_1, \gamma_2, \ldots, \gamma_k$ represent the roots of the polynomial $\Psi_k(z)$. They are labeled so that \begin{equation}\label{eq:rootsorder} \gamma > |\gamma_2| \geq |\gamma_3| \geq \cdots \geq |\gamma_{k-1}| \geq |\gamma_k|, \quad \text{where } \gamma=\gamma(k). \end{equation}
For $k \geq 2$, let
\begin{equation*}
	f_{k}(x)=\frac{x-1}{2+(k+1) (x-2)}. 
\end{equation*}
In \cite{DDu2014},  Dresden and Du proved that
\begin{equation}\label{eq2} \quad\left|f_{k}\left(\gamma_{i}\right)\right|<1,\quad2 \leq i \leq k.
\end{equation}
So, the number $f_{k}(\gamma)$ is not an algebraic integer. 
In addition, they showed that
\begin{equation}\label{binet like}
	L_{n}^{(k)}=\sum_{i=1}^{k} (2\gamma_i-1)f_{k}\left(\gamma_{i}\right) \gamma_{i}^{{n-1}} \quad \text { and } \quad\left|L_{n}^{(k)}-f_{k}(\gamma) (2\gamma-1)\gamma^{n-1}\right|<\frac{3}{2} 
\end{equation}
hold for all $n \geq 2-k$ and $k \geq 2$. In \cite{rl2016}, it was shown that the logarithmic height of $f_k(\gamma)$ satisfies
\begin{equation}\label{eq8}  h(f_k(\gamma)) < 2 \log k \quad \text{for all } k \geq 2.  \end{equation}

A persistent challenge in number theory is determining if and when a linear recurrence sequence hits zero. This is formally known as the Skolem problem, which seeks to identify the set of indices $\mathcal{Z}(u) = \{n \in \mathbb{Z} : u_n = 0\}$. The size of this set is termed the sequence's zero-multiplicity. Calculating the cardinality of $\mathcal{Z}(u)$ is not currently supported by any general methodology. Nevertheless, Skolem provided a fundamental breakthrough regarding this issue with the following theorem:

\emph{{\bf Skolem Theorem}\cite{ShoreyTijdeman2008, Skolem1934}{\bf:} If the coefficients of the linear recurrence sequence are rational, then the set $\Zstroke(u)$ is a union of finitely many arithmetic progressions together with a finite set.}

Extensions of this result were obtained by Mahler, Lech, and others \cite{Lech1953, Mahler1935, MahlerCassels1956}. They proved that for linear recurrence sequences with rational or algebraic coefficients, the zero set is a finite union of isolated points and arithmetic progressions. Their arguments, however, were ineffective. Berstel and Mignotte \cite{BerstelMignotte1976} later developed methods to find these arithmetic progressions.

In the study of linear recurrence sequences, a sequence is classified as \emph{degenerate} if the ratio of any two distinct roots of its characteristic polynomial is a root of unity. Otherwise, the sequence is \emph{non-degenerate}.
For non-degenerate linear recurrence sequences, determining the exact zero-multiplicity remains a central problem. Some partial progress have been achieved for sequences with integer coefficients and small order as discussed in \cite{Berstel1974, Beukers1991, BeukersTijdeman1984, Ward1959}. In contrast, for any order $k \ge 4$, only very large bounds in terms of $k$ are available; for instance, Wolfgang M. Schmidt \cite{Schmidt1999} established
$\Zstroke(u) \leq \exp(\exp(\exp(3k\log k))).$ Hagedorn \cite{Hagedorn2005}  showed that if all roots of the characteristic polynomial are real, the zero-multiplicity is at most $2k-3$. Recent literature has seen the Skolem problem solved for specific families, including $k$-generalized Fibonacci numbers \cite{ggl2020} and $k$-generalized Pell numbers \cite{MBP2025}. 
In this research, we investigate the zero-multiplicity of the $k$-generalized Lucas sequence $(L_n^{(k)})_{n \in \mathbb{Z}}$. To facilitate this, we extend the sequence to negative indices $n$ using the relation
\begin{equation}\label{negative representation}
L_{n-k}^{(k)} = -L_{n-(k-1)}^{(k)} - L_{n-(k-2)}^{(k)} - \cdots -L_{n-2}^{(k)}- L_{n-1}^{(k)} + L_n^{(k)}.
\end{equation}
Furthermore, we consider the sequence $(Q_n^{(k)})_{n \in \mathbb{Z}}$ defined for $n \geq 0$ with the specific initial conditions $Q_i^{(k)} = 0$ for $1 \leq i \leq k-2$ and $Q_{k-1}^{(k)} = -1$. For all $n \geq k$, this sequence satisfies the recursive relation \begin{equation}\label{Q_n}Q_n^{(k)}=Q_{n-k}^{(k)}-
 Q_{n-(k-1)}^{(k)}-\cdots-Q_{n-1}^{(k)}, \hspace{0.1cm}\text{holds for all } n \geq k.\end{equation}
The following observations concern the zero-multiplicity of this sequence for non-positive indices $n \leq 0$:
\begin{table}[h!]
\centering
\begin{tabular}{c|l|c}

$k$ & Indices  & Zero-Multiplicity \\ 
\hline 2 & -- & 0 \\
3 & $-1$ &1\\
4 & $[-2, -1],-6$ &3\\
5 & $[-3, -1], [-8,-7], -13$ &6\\
6 &  $[-4, -1], [-10, -8]$, $[-16,-15]$, $-22$ & 10 \\
7 &  $[-5, -1], [-12, -9], [-19,-17]$, $[-26,-25]$, $-33$ & 15\\
\end{tabular}

\label{tab:zero_multiplicity}
\end{table}
\begin{theorem}\label{main result 1}  
The largest nonnegative integer solution $n$ to the Diophantine equation $L_{-n}^{(k)} = 0$ satisfies the following bounds:
\begin{itemize}
    \item[(i)] If $k$ is even, then $n < 2k^{k^2}\log(490k^2).$ 
    \item[(ii)] If $k$ is odd, then $n < 1.5 \cdot 10^{17} \cdot 1.454^{k^3} \cdot k^{12} \cdot (\log k)^2.$ 
\end{itemize}
\end{theorem}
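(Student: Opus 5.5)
The plan is to write $L_{-n}^{(k)}$ via the Binet-type formula \eqref{binet like}, which I assume continues to hold for all $n\in\mathbb{Z}$ since both sides satisfy the recurrence \eqref{negative representation} and agree on $k$ consecutive indices:
\[
L_{-n}^{(k)}=\sum_{i=1}^{k} c_i\,\gamma_i^{-n-1},\qquad c_i=(2\gamma_i-1)f_k(\gamma_i).
\]
For $n\to\infty$ the dominant contributions now come from the roots of \emph{smallest} modulus, namely $\gamma_k$ (and possibly $\gamma_{k-1}$) in the ordering \eqref{eq:rootsorder}. I would first establish, as a lemma on the roots of $\Psi_k$:
\begin{itemize}
\item[(a)] For $k$ even, $\Psi_k$ has exactly one negative real root, and it is the unique root of minimal modulus, i.e.\ $|\gamma_k|<|\gamma_{k-1}|$.
\item[(b)] For $k$ odd, the minimal modulus is attained by a single complex-conjugate pair $\gamma_{k-1}=\overline{\gamma_k}$, with $|\gamma_{k-1}|<|\gamma_{k-2}|$.
\end{itemize}
I also need an effective separation $|\gamma_{j}|/|\gamma_{k}|\ge 1+\epsilon_k$ between the dominant root(s) and the rest. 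For this I would use a Liouville/Mahler-type root-separation bound applied to the polynomial whose roots are the products $\gamma_i\overline{\gamma_j}$ (equivalently, $|\gamma_i|^2-|\gamma_j|^2$ is a nonzero algebraic number of degree at most $k^2$). Since all these quantities have height $O(\log k)$, this should give $\epsilon_k \gg k^{-k^2}$ up to constants, which is where $k^{k^2}$ enters.

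\emph{Even case.} If $L_{-n}^{(k)}=0$, then
\[
|c_k|\,|\gamma_k|^{-n-1}\le \sum_{i<k}|c_i|\,|\gamma_i|^{-n-1}\le (k-1)\max_{i<k}|c_i|\;|\gamma_{k-1}|^{-n-1}.
\]
Hence $(|\gamma_{k-1}|/|\gamma_k|)^{n+1}\le (k-1)\max|c_i|/|c_k|$. I bound the $|c_i|$ from above using \eqref{eq2} and $|2\gamma_i-1|\le 3$. I bound $|c_k|$ from below using $|\gamma_k|>3^{-1/k}$ together with an explicit lower bound for $|2+(k+1)(\gamma_k-2)|^{-1}$ and $|\gamma_k-1|$. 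Combining with $\log(1+\epsilon_k)\ge \epsilon_k/2$ should yield $n<2k^{k^2}\log(490k^2)$, with the constant $490k^2$ coming from these coefficient estimates.

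\emph{Odd case.} Here there are two dominant terms, $c\,\delta^{-n-1}+\bar c\,\bar\delta^{-n-1}$ with $\delta=\gamma_k$. A zero forces
\[
\bigl|(\bar c/c)(\delta/\bar\delta)^{n+1}+1\bigr|\le C_k\,(1+\epsilon_k)^{-(n+1)},
\]
so the linear form
\[
\Lambda=(n+1)\log(\delta/\bar\delta)+\log(\bar c/c)-\log(-1)
\]
in logarithms of algebraic numbers of modulus one is exponentially small. Note that $\delta/\bar\delta$ is not a root of unity, by non-degeneracy; I would check this via the irreducibility of $\Psi_k$ and a Galois argument.

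I then apply Matveev's theorem in the field $\mathbb{Q}(\gamma_1,\dots,\gamma_k)$, whose degree $D$ can be as large as $k!$. That is too big, so I would instead work in $\mathbb{Q}(\delta,\bar\delta)$ of degree at most $k^2$, and use \eqref{eq8} to bound the height of $\bar c/c$ by $O(\log k)$. Matveev gives
\[
\log|\Lambda|> -C\,D^{2}\log D\,(\log k)^2\log n,
\]
with $C$ of size roughly $1.4\cdot 30^6\cdot 3^{4.5}$ and $D\le k^2$. Comparing this with $\log|\Lambda|<-(n+1)\epsilon_k+\log C_k$ gives $n\ll \epsilon_k^{-1}k^{O(1)}(\log k)^2\log n$, and solving in $n$ produces the stated form $1.5\cdot10^{17}\cdot 1.454^{k^3}k^{12}(\log k)^2$. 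The factor $1.454^{k^3}$ should come from a sharper separation estimate $\epsilon_k^{-1}$ for the conjugate pair versus the next modulus.

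The main obstacle is the root-structure lemma (a)/(b) together with the explicit separation $\epsilon_k$. Identifying which roots have minimal modulus, and proving strict inequality with an effective gap, requires either an argument-principle count on circles $|z|=r$ for $\Psi_k(z)(z-1)=z^{k+1}-3z^k+z^{k-1}+1$, or a Rouché-type argument. I would try the Rouché approach first, then feed the resulting nonvanishing of $|\gamma_i|^2-|\gamma_k|^2$ into a Mahler measure lower bound to make the gap explicit.
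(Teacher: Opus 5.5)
Your skeleton matches the paper's: the Binet formula at negative indices, domination by the smallest-modulus root(s), a triangle-inequality argument for even $k$, and for odd $k$ a three-term linear form in $\mathbb{Q}(\gamma_k,\gamma_{k-1})$ (degree $\le k^2$) fed into Matveev and then Lemma~\ref{lem2}. The trouble lies in the inputs you propose to supply yourself. The paper does not derive the root gaps. It imports Lemma~\ref{lem:properties}(i), which gives $|\gamma_i|/|\gamma_j|>1+1.454^{-k^3}$ in general, and (vi), which gives $|\gamma_{k-1}|/|\gamma_k|>1+k^{-k^2}$ for even $k$.

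Your sketch would not produce these. The claim that $|\gamma_i|^2-|\gamma_j|^2$ has degree at most $k^2$ is unjustified, since it involves up to four conjugates of $\gamma$. What is true is weaker: each $|\gamma_i|^2$ is a root of the monic integer polynomial of degree $\binom{k+1}{2}$ with roots $\gamma_a\gamma_b$. That polynomial has Mahler measure at most $2^{k+1}$, so a Mahler-type separation bound only gives a gap of size $e^{-ck^3}$. That is the shape of (i), not $k^{-k^2}$.

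You also have the comparison backwards. Since $k\log 1.454>\log k$ for all $k\ge 2$, we have $1.454^{k^3}>k^{k^2}$. So (i) is the weaker, generic estimate, and it is the even case that needs the stronger, special input (vi). As sketched, your route could at best give a bound of the shape of part (ii), not the $k^{k^2}$ of part (i). Incidentally, your $(z-1)\Psi_k(z)$ was computed from the misprinted $\Psi_k$. The characteristic polynomial is $z^k-z^{k-1}-\cdots-z-1$, so $(z-1)\Psi_k(z)=z^{k+1}-2z^k+1$.

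Second, in the odd case you never verify $\Gamma\neq 0$ (equivalently $\Lambda\neq0$), which Matveev's theorem requires. Knowing that $\gamma_k/\gamma_{k-1}$ is not a root of unity only shows that $\Gamma=0$ for at most one $n$; it does not bound that $n$. The paper closes this by conjugation. If $\Gamma=0$, apply $\sigma$ with $\sigma(\gamma_k)=\gamma$ and $\sigma(\gamma_{k-1})=\gamma_j$, $j\neq 1$. Taking absolute values gives $|\gamma/\gamma_j|^{n+1}=|f_k(\gamma)(2\gamma-1)|/|f_k(\gamma_j)(2\gamma_j-1)|$, and Lemma~\ref{lem:properties}(ii),(v) then give $n<13k$, well inside the claimed bound.

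Two smaller points. Your $\theta_i$ are non-real, so you need the complex form of Matveev's bound, with constant $3\cdot 30^{t+4}(t+1)^{5.5}$ as used in the paper, not the real constant $1.4\cdot 30^{t+3}t^{4.5}$. It is the former, with $A_1A_2A_3\approx 31k^5\log k$, that yields $5.2\cdot 10^{16}k^9(\log k)^2$ and hence $1.5\cdot 10^{17}$. Also, your $C_k$ needs a lower bound for $|f_k(\gamma_k)|$ when $\gamma_k$ is non-real; this is Lemma~\ref{lem:properties}(iv). Your even-case trick using $\gamma_k<0$ does not apply there.
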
 
The explicit form of the terms can be expressed by introducing an auxiliary function $\psi(y, z)$ given by the sum of two binomial coefficients:\begin{equation}\label{psi}\psi(y,z) = \binom{y}{z} + \binom{y+1}{z+1}.\end{equation}We adopt the standard conventions that $\psi(y, -1) = 1$ and that the function vanishes in cases where $z > y \geq 0$.
\begin{theorem}[]\label{thm2}
If $k \ge 2$, then\begin{itemize}
    \item[(i)] \vspace{0.1cm} $Q_{mk+r}^{(k)} = 0$, for $0 \le m \le r \le k-2$,
    \item[(ii)]\vspace{0.1cm}$Q_{mk-1}^{(k)} = -2^{m-1},$ for $m \in [1, k-2],$
    \item[(iii)]\vspace{0.1cm} $Q_{mk+r}^{(k)} = (-1)^r \left[ \psi(m-1, r-1) 2^{m-r} + \psi(m-1, r) 2^{m-r-2} \right]$,\vspace{0.2cm}\\ for $0 \le r \le m \le k-2$,\vspace{0.1cm}
    \item[(iv)]for $r \in [-1, k-2]$ and $m \ge k-2$,\begin{equation*}\begin{split}Q_{mk+r}^{(k)} = \sum_{i=0}^{l} (-1)^{ik+r} &[ \psi(m-i-1, ik+r-1) 2^{m-i(k+1)-r} \\ +& \psi(m-i-1, ik+r) 2^{m-i(k+1)-r-2} ],\end{split}\end{equation*}  where $l=m$ for $k=2$ and $l = \lfloor (m+1)/(k-1) \rfloor$ if $k>2$. 
\end{itemize}
\end{theorem}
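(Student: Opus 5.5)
The plan is to find a short recurrence for $Q_n^{(k)}$ by a generating-function manipulation, expand the rational function explicitly, and read off all four parts as special cases of a single coefficient formula. I take $Q_0^{(k)}=2$ (the value $L_0^{(k)}$, since $Q$ is evidently meant to encode $L_{-n}^{(k)}$); this should be confirmed against the authors' conventions.

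\textbf{Step 1: a shorter recurrence.} Set $G(x)=\sum_{n\ge 0}Q_n^{(k)}x^n$. The recurrence \eqref{Q_n} says that $G(x)$ times
\[
D(x)=1+x+\cdots+x^{k-1}-x^k
\]
is a polynomial $P(x)$ of degree at most $k-1$, fixed by the initial values $Q_0^{(k)}=2$, $Q_1^{(k)}=\cdots=Q_{k-2}^{(k)}=0$, $Q_{k-1}^{(k)}=-1$. Multiplying $D$ by $1-x$ telescopes:
\[
(1-x)D(x)=1-2x^k+x^{k+1}.
\]
Hence for $n\ge k+1$ we get $Q_n^{(k)}=2Q_{n-k}^{(k)}-Q_{n-k-1}^{(k)}$, which explains the powers of $2$. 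The new numerator $N(x)=(1-x)P(x)$ has only a few nonzero coefficients, which I will compute explicitly from the initial data.

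\textbf{Step 2: expansion.} Write
\[
\frac{1}{1-2x^k+x^{k+1}}=\sum_{j\ge0}x^{kj}(2-x)^j=\sum_{j\ge0}\sum_{s=0}^{j}\binom{j}{s}2^{j-s}(-1)^s x^{kj+s}.
\]
For $n=mk+r$, a term $x^{kj+s}$ contributes to $x^n$ exactly when $j=m-i$ and $s=ik+r$ for some $i\ge0$. The condition $s\le j$ becomes $i(k+1)\le m-r$, which produces the truncation $l$. The resulting power of two is $2^{\,j-s}=2^{\,m-i(k+1)-r}$, matching (iv).

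Next I convolve with $N(x)$. Each monomial of $N$ shifts $(m,r)$ by a bounded amount, so one gets a sum of shifted binomials $\binom{m-i-\epsilon}{ik+r-\delta}$ with powers $2^{m-i(k+1)-r}$ and $2^{m-i(k+1)-r-2}$. Pascal's rule $\binom{y}{z}+\binom{y}{z+1}=\binom{y+1}{z+1}$ should pair these into the two $\psi$-terms, using $\psi(y,z)=\binom{y}{z}+\binom{y+1}{z+1}$ from \eqref{psi}. The sign $(-1)^s=(-1)^{ik+r}$ gives the stated sign.

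\textbf{Step 3: specialisations.} Parts (i)--(iii) should come from the general formula by restricting ranges.
\begin{itemize}
\item For $m\le k-2$ only $i=0$ survives. Then (iii) is the $i=0$ term.
\item Part (i), $m\le r$ with $r\ge1$: the binomials vanish by the convention $\psi(y,z)=0$ for $z>y\ge0$.
\item The case $m=r=0$ of (i) is really the index $n=0$. The printed formula gives $\psi(-1,-1)/4+\psi(-1,0)/4=(1+\binom00)/4+(\binom{-1}{0}+\binom01)/4=3/4$, not $0$, so I suspect a misprint and would handle $n=0$ separately or check the range.
\item Part (ii), $r=-1$: rewrite $mk-1=(m-1)k+(k-1)$. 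Then (ii) amounts to $Q_{mk-1}^{(k)}=2Q_{(m-1)k-1}^{(k)}$ for $m\le k-2$, which holds because the subtracted term $Q_{(m-1)k-2}^{(k)}$ vanishes by (i) in that range. Since $Q_{k-1}^{(k)}=-1$, this yields $-2^{m-1}$.
\end{itemize}
I would also verify every part against the table for small $k$. Part (iv) follows by induction on $m$ via the recurrence $Q_n^{(k)}=2Q_{n-k}^{(k)}-Q_{n-k-1}^{(k)}$; alternatively it is simply the general coefficient formula.

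\textbf{Main obstacle.} The hard step is the bookkeeping in the convolution of Step 2. The shifted binomials must be paired exactly into the $\psi$ combinations, with the conventions $\psi(y,-1)=1$ and vanishing for $z>y$ handling the boundary terms. The upper limit $l$ must also be justified: it is $\lfloor (m+1)/(k-1)\rfloor$ rather than the naive $\lfloor (m-r)/(k+1)\rfloor$, so the extra terms have to vanish by the binomial conventions. I would first compute $N(x)$ and test the formula numerically for $k=3,4$ before attempting the general Pascal-identity manipulation.
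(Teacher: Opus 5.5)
\textbf{The gap is in part (i).} You assert that for $m\le r$ both $\psi$-terms vanish by the convention $\psi(y,z)=0$ for $z>y\ge0$. On the diagonal $m=r\ge1$ this fails. There $\psi(m-1,r-1)=\psi(m-1,m-1)=\binom{m-1}{m-1}+\binom{m}{m}=2$, and only $\psi(m-1,m)$ vanishes. So your own $i=0$ term gives $Q^{(k)}_{m(k+1)}=2(-1)^m\neq0$, and this is the true value. For $k\ge3$ we have $Q_{k+1}=2Q_1-Q_0=-2$. For $2\le m\le k-2$ we have $Q_{m(k+1)}=2Q_{(m-1)k+m}-Q_{(m-1)(k+1)}=-Q_{(m-1)(k+1)}$. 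For example $L^{(4)}_{-5}=-2$, consistent with the table, which lists only $-1,-2,-6$ for $k=4$. Together with $Q_0=L_0=2$, this means part (i) is false on the whole diagonal $m=r$ and can only be proved for $0\le m<r\le k-2$. Your $m=r=0$ remark points the right way, but the arithmetic is off: the first term carries $2^{m-r}=1$, not $1/4$. The relevant fact is simply $Q_0=2\neq0$. The strict range $m<r$ is exactly the union of the zero blocks $[1+(j-1)(k+1),\,jk-2]$, and it is all that the paper's own argument for (i) delivers. The diagonal $m=r\ge1$ is correctly covered by (iii). Your proof of (ii) uses (i) only at $(m-2,k-2)$, which lies in the strict range, so (ii) survives.

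\textbf{Comparison of routes.} Apart from this, your approach differs from the paper's and it works. The paper never expands $Q$ directly. It writes $Q_n=2H_{n-1}-H_n$ with $H_n=F^{(k)}_{-n}$, imports closed forms for $H_n$ from \cite{ggl2024}, and re-indexes by blocks via $m=b(k-1)+j-1$; that re-indexing is where $l=\lfloor (m+1)/(k-1)\rfloor$ comes from. Your generating function gives $N(x)=2-x^{k-1}-x^k$, hence $Q_n=2c_n-c_{n-k+1}-c_{n-k}$. With $y=m-i-1$, $z=ik+r$ and $E=m-i(k+1)-r$, the $i$-th term is $\binom{y+1}{z}2^{E+1}-\binom{y}{z}2^{E-1}+\binom{y}{z+1}2^{E-2}$. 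Expand both this and $\psi(y,z-1)2^{E}+\psi(y,z)2^{E-2}$ in the basis $\binom{y}{z-1},\binom{y}{z},\binom{y}{z+1}$ using Pascal's rule. Both equal $2^{E-2}\bigl[8\binom{y}{z-1}+6\binom{y}{z}+\binom{y}{z+1}\bigr]$, so the match is termwise whenever $y\ge0$. This is also why $m=0$ must be excluded from (iii). Your natural cutoff is $\lfloor (m-r)/(k+1)\rfloor$. The extra terms up to $l$ vanish by the stated convention as long as $y\ge0$, which holds for all $i\le l$ when $k\ge4$ and $m\ge k-2$. For $k=2$, and for $k=3$ with $m=1$, the top term has $y=-1$, and you need the convention $\binom{-1}{z}=0$ rather than the extended binomial. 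Your route is self-contained and exposes the sharp cutoff; the paper's route leans on existing results and sets up the block structure it reuses later.
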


\begin{theorem}\label{coro 1.2} Let $(\delta_k)_{k \geq 2}$ be the zero-multiplicity counts of the $k$-generalized Lucas sequence. Then $\delta_2 = 0$, $\delta_3 = 1$ and 
$\delta_k =(k-2)(k-1)/2,$ for all $k.$ 
\end{theorem}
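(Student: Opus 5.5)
The plan is to reduce everything to the auxiliary sequence $(Q_n^{(k)})$ and then read off the zeros from Theorem~\ref{thm2}, using Theorem~\ref{main result 1} only as a safety net showing that the search is finite.

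\textbf{Step 1: an identity between $L_{-n}^{(k)}$ and $Q^{(k)}$.} Iterating \eqref{negative representation} backwards from the initial block $L_1^{(k)}=1$, $L_0^{(k)}=2$, $L_{-1}^{(k)}=\cdots=L_{-(k-2)}^{(k)}=0$ gives a recurrence in $n$ for $L_{-n}^{(k)}$. This recurrence has the same shape as \eqref{Q_n}, namely the reversed characteristic polynomial. Hence $L_{-n}^{(k)}$ is a fixed $\mathbb{Z}$-linear combination of at most two shifted copies of $Q^{(k)}$, say
\[
L_{-n}^{(k)} = a\,Q_{n+s}^{(k)} + b\,Q_{n+t}^{(k)} .
\]
The constants are determined by matching the first $k$ terms. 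For definiteness I would first try $L_{-n}^{(k)}=Q_n^{(k)}+cQ_{n-1}^{(k)}$ and fit $c$ on the initial block. The table strongly suggests the outcome: the zeros are exactly at $n=mk+r$ with $0\le m<r\le k-2$. For $k=4$ these are $1,2,6$, and for $k=5$ they are $1,2,3,7,8,13$. The second $Q$-term in the identity must be what kills the diagonal cases $m=r$. Those indices are zeros of $Q$ by Theorem~\ref{thm2}(i), but they are not zeros of $L$.

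\textbf{Step 2: counting zeros in the small range $n\le (k-2)k+(k-2)$.}
\begin{itemize}
\item For $m<r$, both $Q$-terms in the identity fall under Theorem~\ref{thm2}(i), so they vanish. This gives exactly $\sum_{r=1}^{k-2} r=(k-1)(k-2)/2$ zeros.
\item For $m=r$, the combination evaluates via Theorem~\ref{thm2}(iii) to $\pm\psi(m-1,m-1)2^{0}\neq 0$, or to a similar nonzero value.
\item For $r<m$, Theorem~\ref{thm2}(ii),(iii) give $|Q|$ equal to a sum of positive terms, each a power of $2$ times a positive binomial expression. The combination is then nonzero by comparing $2$-adic valuations: the summand with the smallest power of $2$ dominates.
\end{itemize}

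\textbf{Step 3: no zeros for $m\ge k-2$.} Here I would use Theorem~\ref{thm2}(iv). Write $L_{-n}^{(k)}$ as an explicit alternating sum over $i\le l$. I would show that the $i=0$ term strictly dominates in $2$-adic valuation: its exponent $m-r-2$ is the largest, and all later terms carry the factor $2^{-i(k+1)}$. Clearing denominators, the term with the most negative exponent is the unique one of minimal $2$-adic valuation, because $\psi$ of the relevant arguments is odd. If parity of $\psi$ fails, I would fall back on an archimedean comparison: use $\psi(m-i-1,ik+r)\,2^{-i(k+1)}$ decreasing geometrically in $i$ for $m$ large relative to $k$. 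The leftover finite window is then covered by Theorem~\ref{main result 1} together with direct verification.

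The main obstacle is this last step. Uniform nonvanishing of the alternating sum in (iv) for all $m\ge k-2$ is exactly what makes the cases $\delta_2=0$ and $\delta_3=1$ special: for $k=3$ the formula gives $(k-1)(k-2)/2=1$ anyway. I expect the $2$-adic argument to require a careful check of the parity of $\psi(m-1,r-1)+\psi(m-1,r)/4$-type combinations. Finally, $k=2$ follows directly, since $L_{-n}=(-1)^nL_n\neq0$.
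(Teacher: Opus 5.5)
Your Step~3 has a genuine gap, and it is where the real difficulty lies. None of the three mechanisms you offer closes it.

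\textbf{The 2-adic dominance.} It rests on ``$\psi$ of the relevant arguments is odd'', which is false: $\psi(0,0)=2$ and $\psi(4,1)=14$. The parity of $\mathcal X_i=4\psi(m-i-1,ik+r-1)+\psi(m-i-1,ik+r)$ is that of $\psi(m-i-1,ik+r)$, which varies. Also, after clearing $2^{l(k+1)}$, the candidate dominant term is the last nonzero one, $i=l'$, not $i=0$.

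\textbf{The archimedean fallback.} It is reversed: $\binom{m-i-1}{ik+r}2^{-i(k+1)}$ \emph{increases} in $i$ once $m$ is moderately larger than $k$. For $m\approx 2k$ and small $r$, the $i=1$ term already exceeds the $i=0$ term by a factor exponential in $k$. So there is no geometric decay in the range where cancellation could occur.

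\textbf{The safety net.} ``Theorem~\ref{main result 1} plus direct verification'' is not a proof. That bound is of order $10^{46}$ already for $k=5$ and grows like $1.454^{k^3}$, and there are infinitely many $k$. You need an argument uniform in $k$.

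\textbf{What the paper does instead.} For even $k$ nothing delicate is needed: $(-1)^{ik+r}=(-1)^r$, so every summand in Theorem~\ref{thm2}(iv) has the same sign, and the $i=0$ summand is nonzero.

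For odd $k$, the paper uses 2-adic information only to obtain a \emph{lower} bound. If $Q_{mk+r}=0$, isolating $i=l'$ gives $2^{k+1}\mid\mathcal X_{l'}$, whence $n\ge 2^{(k-1)/2}$. This is played against an analytic \emph{upper} bound $n<1.7\cdot10^{17}k^{19.6}(\log k)^3(\pi/e)^k$. That bound comes from Theorem~\ref{thm1} with the root separation of Lemma~\ref{lem6}, not the $1.454^{-k^3}$ of Lemma~\ref{lem:properties}(i). Since $\pi/e<\sqrt2$, this bounds $k$. Sharper numerics remove the remaining $k>500$, and $k\le 500$ is handled by Baker--Davenport reduction and computation (Lemmas~\ref{lem4.1}, \ref{lem4.2}).

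If you take this route, note that an upper bound for $\nu_2(\mathcal X_{l'})$ must be proved directly. The ultrametric inequality only bounds valuations of sums from below, so your parity worry resurfaces there.

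\textbf{Step~1.} This step chases a phantom. $Q_n^{(k)}=L_{-n}^{(k)}$ identically: the initial values agree, and \eqref{Q_n} is \eqref{negative representation} read at negative indices, as the paper uses in Lemma~\ref{lemma_associate}. So $c=0$.

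The clash you noticed at $m=r$ is a misprint in Theorem~\ref{thm2}(i), which should read $0\le m<r\le k-2$. Part (iii) gives $Q_{m(k+1)}=(-1)^m\cdot 2$; for example, $L^{(4)}_{-5}=-2$. With that correction your Step~2 count is right. For $r<m$, positivity of both summands in (iii) already gives nonvanishing, so no 2-adic comparison is needed there.
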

\section{Auxiliary Results}To resolve Diophantine equations efficiently, one often relies on Baker-type lower bounds for non-vanishing linear forms in logarithms. This section outlines the essential principles of algebraic number theory and the specific theorems used to compute and refine these bounds.

The complexity of an algebraic number $\theta$ is quantified by its absolute logarithmic height, denoted as $h(\theta)$. Let $\theta$ have a minimal polynomial $$g(X) = c_0 \prod_{i=1}^{k} (X - \theta^{(i)}) \in \mathbb{Z}[X],$$where $c_0 > 0$ and $\theta^{(i)}$ represent the conjugates of $\theta$. The logarithmic height of $\theta$ is defined as $$h(\theta) = \frac{1}{k} \left( \log c_0 + \sum_{j=1}^{k} \max\{0, \log |\theta^{(j)}|\} \right).$$ If $\theta = c/d$, $(d\neq 0)$ is a rational number with $c$ and $d$ are relatively prime, then $h(\theta)=\log$(max$\{|c|,d\}).$ 

Let $\theta$ and $\beta$ be any two algebraic numbers. The following properties of the logarithmic height $h$ are established in \cite[Theorem B5]{b2018}:
\begin{equation*}
	\begin{split}
		& (i)\hspace{0.2cm} h(\theta\pm\beta) \leq h(\theta)+h(\beta)+\log 2,\\
		&(ii)\hspace{0.2cm} h(\theta\beta^{\pm 1})\leq h(\theta)+h(\beta),\\
		&(iii)\hspace{0.2cm} h(\theta ^k)=|k|h(\theta).
	\end{split}
\end{equation*}

Building upon these notations, we cite a specialized result from \cite{bgl2016}, which refines the landmark theorem by Matveev \cite{m2000} and its later adaptations by Bugeaud et al. \cite{bms2006}. This theorem is instrumental in establishing the explicit upper bounds required for the variables in our primary equations.

\begin{theorem}\label{thm1}\cite{bgl2016} Let $\theta_1,\ldots,\theta_t$ be positive real numbers in an algebraic number field $\mathbb{K}$ of degree $d_{\mathbb{K}}$ and $e_1, \ldots, e_t$ be nonzero integers. If $\Lambda = \prod\limits_{i=1}^{t} \theta_{i}^{e_i} -1$ is not zero, then 
	\begin{equation}
	 \log |\Lambda| > -3 \cdot 30^{t+4} (t + 1)^{5.5}  d_{\mathbb{K}}^2 (1 + \log d_{\mathbb{K}})(1 + \log tB)A_1 \cdots A_t,
	\end{equation}
	where $B\geq \max\{|e_1|,\ldots,|e_t|\}$ and  $A_1, \ldots, A_t$ are positive integers such that $A_j \geq h'(\theta_j) = \max\{d_{\mathbb{K}}h(\theta_j),|\log \theta_j|, 0.16\},$ for $j= 1,\ldots,t$.
\end{theorem}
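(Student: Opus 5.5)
Since Theorem \ref{thm1} is quoted from \cite{bgl2016}, my plan is not to re-derive a lower bound for linear forms in logarithms from scratch. Instead I will deduce the stated inequality as a weakening of Matveev's theorem \cite{m2000}, in the multiplicative real form given by Bugeaud, Mignotte and Siksek \cite{bms2006}. That form reads as follows. Let $\theta_1,\dots,\theta_t$ be positive real numbers in a real number field $\mathbb{K}$ of degree $d_{\mathbb{K}}$, let $e_1,\dots,e_t$ be integers, and suppose $\Lambda=\prod\theta_i^{e_i}-1\neq 0$. Then
\begin{equation*}
\log|\Lambda| > -1.4\cdot 30^{t+3}\, t^{4.5}\, d_{\mathbb{K}}^2\,(1+\log d_{\mathbb{K}})(1+\log B)\,A_1\cdots A_t ,
\end{equation*}
where $B\ge\max|e_i|$ and $A_j\ge\max\{d_{\mathbb{K}}h(\theta_j),|\log\theta_j|,0.16\}$.

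\textbf{Step 1 (matching hypotheses).} Our hypotheses are positive reals $\theta_j$, nonzero integer exponents $e_j$, $\Lambda\ne0$, and positive integers $A_j\ge h'(\theta_j)$. These are a special case of the hypotheses above, so Matveev's bound applies verbatim with the same $B$ and the same $A_j$. The only point to check is the field: $\mathbb{K}$ must be a real field. If the given $\mathbb{K}$ is not real, I will replace it by $\mathbb{K}\cap\mathbb{R}$ (or by $\mathbb{Q}(\theta_1,\dots,\theta_t)$), which contains every $\theta_j$ and has degree at most $d_{\mathbb{K}}$. Its degree enters the bound monotonically, so this replacement only strengthens the conclusion.

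\textbf{Step 2 (comparing constants).} It remains to check
\begin{equation*}
1.4\cdot 30^{t+3}t^{4.5}(1+\log B)\le 3\cdot 30^{t+4}(t+1)^{5.5}(1+\log tB),
\end{equation*}
which is immediate factor by factor:
\begin{itemize}
\item[(a)] $1.4\cdot 30^{t+3}<3\cdot30^{t+4}$;
\item[(b)] $t^{4.5}\le (t+1)^{5.5}$;
\item[(c)] $1+\log B\le 1+\log(tB)$, since $t\ge1$.
\end{itemize}
Every factor $d_{\mathbb{K}}^2(1+\log d_{\mathbb{K}})A_1\cdots A_t$ is positive, so multiplying the right-hand side by a larger positive number makes the lower bound on $\log|\Lambda|$ only weaker. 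Hence the claimed inequality follows.

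\textbf{Main obstacle.} The only genuinely delicate point is Step 1: making sure the real-case version of Matveev's theorem is applicable, namely that the field is real or can be replaced by a real subfield of no larger degree. If one preferred to invoke Matveev's general (complex) theorem instead, the constant changes, but it remains dominated by the generous $3\cdot30^{t+4}(t+1)^{5.5}$ allowed here. This is presumably why \cite{bgl2016} stated the bound in this looser uniform form.
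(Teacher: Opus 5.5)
Your derivation is correct for the statement as literally written, but it does not reach the version of the theorem the paper actually needs. For positive real $\theta_j$ you may work in the real field $\mathbb{Q}(\theta_1,\dots,\theta_t)$, of degree $d'\le d_{\mathbb{K}}$. The given $A_j$ still satisfy $A_j\ge d'h(\theta_j)$, so the real-case Matveev bound applies with constant $1.4\cdot 30^{t+3}t^{4.5}(1+\log B)$. Weakening it factor by factor is legitimate, since every factor is positive ($B\ge 1$).

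The paper gives no proof, only the citation, so what matters is where the bound comes from, and your closing guess about that is wrong. The shape $3\cdot 30^{t+4}(t+1)^{5.5}(1+\log tB)$ is not a loosened real bound. It is the complex multiplicative form of Matveev's theorem, as quoted e.g.\ in Bugeaud's monograph, and it is derived as follows:
\begin{itemize}
\item[(1)] For arbitrary nonzero algebraic $\theta_j\in\mathbb{C}$ with principal logarithms, write $\log(1+\Lambda)=\sum_j e_j\log\theta_j+2\ell\cdot i\pi$, where $|2\ell|$ is roughly $tB$.
\item[(2)] Apply the non-real case of Matveev (constant $\frac{e^2}{8}30^{n+3}n^{5.5}$ for $n$ logarithms) to the $n=t+1$ logarithms, including $\log(-1)=i\pi$.
\item[(3)] The extra $A_{t+1}=\pi$ turns $e^2\pi/8\approx 2.90$ into the factor $3$, and the coefficient $2\ell$ produces the $\log tB$.
\end{itemize}

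This complex version is what the paper uses in the odd-$k$ case of Theorem \ref{main result 1}. There $\theta_1,\theta_2,\theta_3$ are non-real numbers of modulus one, and the paper even invokes $|\log\theta_i|<\pi$. Your route gives a short rigorous proof of the positive-real statement from a sharper result, but that statement is too weak for the paper's application. To justify that use, one must cite or derive the complex version instead.
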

Next, we recall a refined version of the classical Baker--Davenport reduction lemma, due to Dujella and Peth\H{o} \cite{dp1998}. This lemma will be used to reduce the upper bounds on our variables.
\begin{lemma}\label{lem1}\cite{dp1998} Let $M$ be a positive integer and $p/q$ denote a convergent of the continued fraction of the real number $\tau$ such that $q > 6M$. Consider the real numbers $A, B, \mu$ with $A > 0$ and $B > 1$. Let $\epsilon = \left\lVert \mu q\right\rVert - M\left\lVert \tau q\right\rVert$, where $\left\lVert .\right\rVert$ denotes the distance from the nearest integer. If $\epsilon > 0$, then there exists no solution to the inequality
	\begin{equation*}
		0 <|u\tau - v +\mu|<AB^{-w},
	\end{equation*}
	in positive integers $u,v,w$ with $u\leq M$ and $w\geq \dfrac{\log(Aq/\epsilon)}{\log B}$.
\end{lemma}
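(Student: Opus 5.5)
The approach is the classical Baker--Davenport argument: multiply the inequality by the denominator $q$ of the convergent, and absorb the contribution of $u\tau$ using the fact that $q\tau$ is very close to the integer $p$. We argue by contradiction. Suppose there are positive integers $u,v,w$ with $u\le M$ and $w\ge \log(Aq/\epsilon)/\log B$ such that
\begin{equation*}
0<|u\tau-v+\mu|<AB^{-w}.
\end{equation*}

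\textbf{Step 1: multiply by $q$ and rearrange.} Multiplying by $q>0$ gives $|uq\tau-vq+\mu q|<qAB^{-w}$. We then write
\begin{equation*}
uq\tau-vq+\mu q=u(q\tau-p)+\mu q-(vq-up).
\end{equation*}
Here $N:=vq-up$ is an integer.

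\textbf{Step 2: use that $p/q$ is a convergent.} Since $p/q$ is a convergent of $\tau$, we have $|q\tau-p|<1/q$. Because $q>6M\ge 6$, this quantity is less than $1/2$, so $|q\tau-p|=\lVert \tau q\rVert$.

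\textbf{Step 3: the key estimate.} By the triangle inequality and $u\le M$,
\begin{equation*}
\lVert \mu q\rVert\le |\mu q-N|\le |uq\tau-vq+\mu q|+u|q\tau-p|<qAB^{-w}+M\lVert \tau q\rVert .
\end{equation*}
Hence $\epsilon=\lVert\mu q\rVert-M\lVert\tau q\rVert<qAB^{-w}$.

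\textbf{Step 4: conclude.} Since $\epsilon>0$, the last inequality rearranges to $B^{w}<Aq/\epsilon$. Because $B>1$, taking logarithms gives $w<\log(Aq/\epsilon)/\log B$, which contradicts the assumed lower bound on $w$. Therefore no such solution exists.

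The argument is elementary, and the only point needing care is Step 2: we must check that $|q\tau-p|$ really equals the distance $\lVert \tau q\rVert$ to the nearest integer, and this is exactly what the convergent property together with $q>1$ provides. The hypothesis $q>6M$ plays no further role in the proof of the lemma itself. Its practical purpose is to make $\epsilon>0$ likely in applications: $M\lVert\tau q\rVert<M/q<1/6$, so positivity of $\epsilon$ only requires $\lVert\mu q\rVert$ not to be unusually small.
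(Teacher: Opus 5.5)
Your proof is correct, and it is essentially the standard argument: the paper gives no proof of this lemma and simply cites \cite{dp1998}, where the same multiply-by-$q$ and triangle-inequality estimate $qAB^{-w} > \lVert \mu q\rVert - M\lVert \tau q\rVert = \epsilon$ is used. Your remark is also accurate: the only thing your argument needs from $q>6M$ is that $q\ge 2$, so that $|q\tau-p|=\lVert \tau q\rVert$.
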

\begin{lemma}\label{lem2}\cite{sl2014} Let $r\geq 1$ and $H > 0$ be such that $H > (4r^2)^r$ and $H>L/(\log L)^r$. Then $L < 2^rH(\log H)^r$.
\end{lemma}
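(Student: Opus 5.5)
We argue by contradiction, using the monotonicity of $g(x)=x/(\log x)^r$. Suppose that $L\ge L_0:=2^rH(\log H)^r$. The hypothesis $H>L/(\log L)^r$ only makes sense with $\log L>0$, so we may take $L>1$.

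\emph{Monotonicity of $g$.} We have
\[
g'(x)=\frac{\log x-r}{(\log x)^{r+1}},
\]
so $g$ is increasing on $(e^r,\infty)$. We check that $L_0$ lies in this range. From $H>(4r^2)^r\ge 4^r$ we get $\log H>r\log 4>1$, and hence $L_0>2^rH>8^r>e^r$. Since $L\ge L_0$, monotonicity gives
\[
\frac{L}{(\log L)^r}\ \ge\ \frac{L_0}{(\log L_0)^r}=\frac{2^rH(\log H)^r}{\bigl(\log(2^rH(\log H)^r)\bigr)^r}.
\]

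\emph{The key estimate.} The main step is to show that
\[
\log\bigl(2^rH(\log H)^r\bigr)\le 2\log H.
\]
This is equivalent to $2^r(\log H)^r\le H$, i.e.\ to $2\log H\le H^{1/r}$. Put $y=H^{1/r}$, so that $y>4r^2$; the required inequality becomes $2r\log y\le y$, i.e.\ $y/\log y\ge 2r$. The function $y/\log y$ is increasing for $y>e$, and $4r^2\ge 4>e$. Hence
\[
\frac{y}{\log y}>\frac{4r^2}{\log(4r^2)}=\frac{2r^2}{\log(2r)}.
\]
It therefore suffices to know that $r\ge\log(2r)$ for $r\ge1$. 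This holds at $r=1$ since $1>\log 2$. Moreover $r-\log(2r)$ has derivative $1-1/r\ge0$, so it stays positive for all $r\ge1$. Consequently $y/\log y>2r$, which proves the key estimate.

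\emph{Conclusion.} Substituting the key estimate into the displayed lower bound gives
\[
\frac{L}{(\log L)^r}\ \ge\ \frac{2^rH(\log H)^r}{(2\log H)^r}=H.
\]
This contradicts the hypothesis $H>L/(\log L)^r$. Hence $L<2^rH(\log H)^r$.

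The only delicate point is the key estimate, which is exactly where the condition $H>(4r^2)^r$ is used.
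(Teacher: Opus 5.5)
Your proof is correct and complete. Each step checks out:
\begin{itemize}
\item the derivative computation showing that $x/(\log x)^r$ increases on $(e^r,\infty)$;
\item the verification $L_0=2^rH(\log H)^r>8^r>e^r$;
\item the reduction of $\log L_0\le 2\log H$ to $(2\log H)^r\le H$, and then to $y/\log y\ge 2r$ with $y=H^{1/r}>4r^2$;
\item the final inequality $r\ge\log(2r)$.
\end{itemize}
The argument also works verbatim for real (not necessarily integer) $r\ge 1$.

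The paper does not prove this lemma; it simply quotes it from \cite{sl2014}. So there is no internal proof to compare against, and your monotonicity-plus-contradiction argument supplies a self-contained justification of the kind typically given for such statements.

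One cosmetic point: your remark that ``we may take $L>1$'' needs no separate justification. Under your contradiction hypothesis, $L\ge L_0>8^r>1$ holds automatically. And if $L\le 1$, the conclusion $L<2^rH(\log H)^r$ is trivial anyway.
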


\begin{lemma} \cite{ggl2020}\label{lem dege}
For a non-degenerate linear recurrence sequence, the number of zero terms (i.e., its zero-multiplicity) is always finite.
\end{lemma}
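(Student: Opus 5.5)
The plan is to follow the classical $p$-adic (Skolem--Mahler--Lech) argument, specialised to the non-degenerate case, where the arithmetic progressions in Skolem's Theorem cannot occur. Let $(u_n)_{n\in\mathbb{Z}}$ be a non-degenerate linear recurrence sequence, not identically zero. Assume its characteristic polynomial has nonzero constant term, so that the sequence extends to all of $\mathbb{Z}$ and every root is nonzero. Write it in generalized power sum form
\[
u_n=\sum_{i=1}^{s}P_i(n)\,\alpha_i^{\,n},
\]
with distinct nonzero roots $\alpha_i$ and nonzero polynomials $P_i$. Let $\mathbb{K}$ be the number field generated by the $\alpha_i$ and the coefficients of the $P_i$.

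First I will choose a rational prime $p$ and a prime $\mathfrak{p}\mid p$ of $\mathbb{K}$ with the following properties: every $\alpha_i$ is a $\mathfrak{p}$-adic unit, every coefficient of every $P_i$ is $\mathfrak{p}$-integral, and $p$ is unramified and large. All but finitely many primes qualify, so such a choice exists. Embed $\mathbb{K}$ into the completion $\mathbb{K}_{\mathfrak{p}}$, whose residue field is finite. Then there is an $N\ge 1$ with $\alpha_i^{N}\equiv 1 \pmod{\mathfrak{p}}$ for all $i$; replacing $N$ by a suitable multiple, I may even assume $\alpha_i^{N}\equiv 1 \pmod{p^2}$ in the $p=2$ case. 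Consequently $\log_p(\alpha_i^{N})$ lies in $p\mathcal{O}$, and $\alpha_i^{Nm}=\exp_p\!\big(m\log_p\alpha_i^{N}\big)$ converges for every $m\in\mathbb{Z}_p$.

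For each residue class $r\in\{0,\dots,N-1\}$, I will define
\[
F_r(x)=\sum_{i=1}^{s}P_i(r+Nx)\,\alpha_i^{\,r}\exp_p\!\big(x\log_p\alpha_i^{N}\big).
\]
Expanding shows that $F_r$ is a power series in $x$ whose coefficients tend to $0$, so it converges on all of $\mathbb{Z}_p$. It also satisfies $F_r(m)=u_{r+Nm}$ for every $m\in\mathbb{Z}$, negative $m$ included. By Strassmann's theorem, either $F_r$ has only finitely many zeros in $\mathbb{Z}_p$, or $F_r$ is identically zero. In the first case, the class $n\equiv r\pmod N$ contributes finitely many zeros of $u_n$. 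Since there are only $N$ classes, finiteness follows once the second case is excluded.

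Excluding the second case is the main obstacle, and it is exactly where non-degeneracy enters. Suppose $F_r\equiv0$. Then $v_m:=u_{r+Nm}=\sum_i \big(P_i(r+Nm)\alpha_i^{r}\big)(\alpha_i^{N})^m$ vanishes for all $m\in\mathbb{Z}$. Non-degeneracy means that no ratio $\alpha_i/\alpha_j$ with $i\ne j$ is a root of unity, so the numbers $\alpha_1^N,\dots,\alpha_s^N$ are pairwise distinct. Moreover, each polynomial $m\mapsto P_i(r+Nm)\alpha_i^r$ is nonzero. I will then invoke the uniqueness of the representation of a generalized power sum: functions $m\mapsto m^j\beta^m$ with distinct $\beta$ are linearly independent. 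This follows from a Vandermonde (confluent Vandermonde) determinant argument, or equivalently from the fact that the characteristic polynomial of $(v_m)$ is determined by its roots. Independence forces all these coefficient polynomials to vanish, which is a contradiction. Hence every $F_r$ has finitely many zeros, so $u_n=0$ for only finitely many $n\in\mathbb{Z}$, which is the finiteness of the zero-multiplicity asserted in the lemma.
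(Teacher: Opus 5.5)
Your argument is correct: it is the standard Skolem--Mahler--Lech $p$-adic proof, applying Strassmann's theorem on each residue class modulo $N$. Non-degeneracy enters exactly where it must, making the $\alpha_i^N$ pairwise distinct so that uniqueness of the generalized power-sum representation rules out a whole progression of zeros. The paper gives no proof of its own and simply cites \cite{ggl2020}, which amounts to Skolem's theorem from the introduction with the arithmetic progressions excluded by non-degeneracy, so your route is essentially the one behind that citation. Your one unstated reduction is harmless: zero characteristic roots affect only finitely many initial terms.
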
 

Research by Mignotte \cite{Mignotte1984} confirmed that for the $k$-generalized Lucas sequence, the ratio $\gamma_i / \gamma_j$ is never a root of unity for any distinct indices $1 \leq i < j \leq k$. Consequently, this sequence is strictly non-degenerate, and by Lemma \ref{lem dege}, it possesses only a finite number of zeros.

The following lemma describe the distribution of the roots $\gamma_i$ and the behavior of the function $f_k(\gamma_i)$.
\begin{lemma}\label{lem:properties}\cite{ggl2020}
Let $\gamma = \gamma_1, \gamma_2, \ldots, \gamma_k$ be the roots of $\Psi_k(z)$ ordered by their magnitudes as defined in \eqref{eq:rootsorder}.

\begin{itemize}
    \item[(i)] If $\gamma_i$ and $\gamma_j$ are two roots satisfying $|\gamma_i| > |\gamma_j|$, then
    ${|\gamma_i|}/{|\gamma_j|} > 1 + 1.454^{-k^3}.$ 
    \item[(ii)] $1/2 \leq f_k(\gamma) \leq 3/4$, for all $k\geq 2$. 
    \item[(iii)] 
    $
    |f_k(\gamma_i)| < \min\left\{{1}/{2}, {2}/{(k-1)}\right\},$ for all $k\geq 2$ and $2\leq i\leq k$.
    \item[(iv)] For all $k\geq 2$,
    \begin{equation*}
    |\gamma_k| < 1 - \dfrac{\log\gamma}{2k}
    \quad \text{and} \quad
    |f_k(\gamma_k)| > \dfrac{\log\gamma}{2k(3k+1)}.
    \end{equation*}
    \item[(v)] For all $k\geq 2$ and $j\neq 1,$ we have
    \begin{equation*}
    |\gamma_k| < 1 - \dfrac{1}{2^8k^3}
    \quad \text{and} \quad
    |f_k(\gamma_k)| > \dfrac{1}{2^8k^3(3k+1)}.
    \end{equation*}
    \item[(vi)] If $k$ is even, then
 ${|\gamma_{k-1}|}/{|\gamma_k|} > 1 + ({1}/{k^{k^2}}).$
\end{itemize}
\end{lemma}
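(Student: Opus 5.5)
The plan is to work throughout with the auxiliary polynomial
$$g_k(x)=(x-1)\Psi_k(x)=x^{k+1}-2x^k+1.$$
Its roots are $1$ together with $\gamma_1,\dots,\gamma_k$; this is where the characteristic polynomial $x^k-x^{k-1}-\cdots-x-1$ of the $k$-generalized Lucas recurrence enters. I would also write
$$f_k(x)=\frac{x-1}{(k+1)x-2k}.$$
I would prove the "soft" items (ii)–(v) first and leave the root-separation items (i) and (vi) to the end, since they are the real difficulty.

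\textbf{Items (ii) and (iii).} For (ii), differentiate $f_k$. The numerator of $f_k'$ is $\big((k+1)x-2k\big)-(k+1)(x-1)=1-k<0$, so $f_k$ is decreasing on $(2(1-2^{-k}),2)$. Since $f_k(2)=1/2$, we get $f_k(\gamma)\ge 1/2$. The upper bound comes from evaluating $f_k$ at the left endpoint for $k\ge 3$, where the value is $(1-2^{1-k})/(2-(k+1)2^{1-k})\le 3/4$. The case $k=2$ is checked directly with the golden ratio, which gives $f_2(\gamma)\approx 0.72$.

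For (iii), take $|z|<1$. Then $|z-1|<2$ and $|(k+1)z-2k|\ge 2k-(k+1)|z|>k-1$, so $|f_k(z)|<2/(k-1)$. This is at most $1/2$ once $k\ge 5$; for $k\le 4$ I would check the finitely many roots numerically.

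\textbf{Items (iv) and (v).} The constant term of $\Psi_k$ is $\pm1$, so $\prod_{i}|\gamma_i|=1$, hence $\prod_{i\ge 2}|\gamma_i|=\gamma^{-1}$. Since $\gamma_k$ has the smallest modulus, this gives
$$|\gamma_k|^{k-1}\le \gamma^{-1}, \qquad\text{so}\qquad |\gamma_k|\le e^{-\log\gamma/(k-1)}<1-\frac{\log\gamma}{2k},$$
using $e^{-t}<1-t/2$ for $0<t<1$. For $f_k(\gamma_k)$ I would bound the two factors separately:
$$|\gamma_k-1|\ge 1-|\gamma_k|>\frac{\log\gamma}{2k}, \qquad |(k+1)\gamma_k-2k|\le 3k+1 .$$
Together these give the lower bound $\log\gamma/\big(2k(3k+1)\big)$ in (iv). Item (v) is then immediate: $\log\gamma\ge\log\big((1+\sqrt5)/2\big)>2^{-7}/k^{2}$, which is far stronger than needed, and so the bounds $1/(2^8k^3)$ and $1/\big(2^8k^3(3k+1)\big)$ follow.

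\textbf{Item (i): separation of distinct moduli (Liouville-type).} Suppose $|\gamma_i|>|\gamma_j|$ and consider
$$\alpha=\gamma_i\bar\gamma_i-\gamma_j\bar\gamma_j .$$
This is a nonzero algebraic integer, since all $\gamma$'s are algebraic integers and complex conjugates of roots are again roots. It lies in the splitting field, but it is a symmetric expression in at most four roots, so it has at most $D\le k^4$ conjugates. More efficiently, I would count via pairs $\{\gamma_a\bar\gamma_a\}$ to get $D\le k^3$ or so. Every conjugate of $\alpha$ has the form $\gamma_a\gamma_b-\gamma_c\gamma_d$, and since all roots have modulus less than $2$, each conjugate has modulus less than $8$. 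The norm of $\alpha$ is at least $1$, so
$$|\alpha|\ge 8^{-(D-1)} .$$
Dividing by $|\gamma_j|^2\le 1$ then gives
$$\frac{|\gamma_i|^2}{|\gamma_j|^2}\ge 1+8^{-D}, \qquad\text{hence}\qquad \frac{|\gamma_i|}{|\gamma_j|}>1+\tfrac13\, 8^{-D}.$$

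\textbf{Main obstacle.} The hard step is getting the explicit base $1.454^{-k^3}$ rather than a weaker $c^{-k^4}$. For this I would first try the sharper conjugate count $D\le k(k-1)(k-2)/2+\cdots$. I would also sharpen the size estimate: only $\gamma_1$ exceeds modulus $1$, so most conjugates of $\alpha$ have modulus at most $2$, and only about $k^2$ of them can reach $4$. With these two refinements, a short optimization of the product of the conjugate moduli should yield $1.454^{k^3}$.

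\textbf{Item (vi): $k$ even.} For even $k$, $g_k(-x)$ changes sign on $(0,1)$, so $\Psi_k$ has a negative real root. I would show that this real root is $\gamma_k$, and that $\gamma_{k-1}$ is non-real (or real of different modulus). The same norm argument then applies to $\alpha=\gamma_{k-1}\bar\gamma_{k-1}-\gamma_k^2$. This $\alpha$ involves only a pair and a single root, so it lives in a field of degree at most about $k^2$. Its conjugates are bounded by roughly $k$ after a cruder estimate, and this gives
$$\frac{|\gamma_{k-1}|}{|\gamma_k|}>1+k^{-k^2}.$$
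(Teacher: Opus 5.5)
\textbf{Overall.} The paper gives no proof of this lemma; it is quoted from \cite{ggl2020}, so your proposal has to stand on its own. Items (ii)--(iv) are fine as you argue them. You are also right to read $\Psi_k$ as $x^k-x^{k-1}-\cdots-1$, since that is what $f_k$ and the recurrence require. The genuine gaps are in the two separation statements (i) and (vi).

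\textbf{Item (i).} Your norm argument for $\alpha=\gamma_i\bar\gamma_i-\gamma_j\bar\gamma_j$ yields only $|\gamma_i|/|\gamma_j|>1+\tfrac13 8^{-k^4}$. The proposed refinements cannot bring the exponent down to $k^3$.
\begin{itemize}
\item When $\gamma_i$ and $\gamma_j$ are both non-real, the conjugates of $\alpha$ are numbers $\gamma_a\gamma_b-\gamma_c\gamma_d$ with $\{a,b\}$ and $\{c,d\}$ disjoint. There are $k(k-1)(k-2)(k-3)/4$ such expressions. If, as expected, the Galois group of $\Psi_k$ is the full symmetric group, every one of them is a conjugate of $\alpha$, so nothing lets you bound $D$ by $O(k^3)$.
\item Your count $k(k-1)(k-2)/2$ is valid only when one of the two roots is real.
\item These conjugates are differences of points lying near the unit circle and spread out in angle. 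So one expects a positive proportion of them to have modulus greater than $1$, and hence the Mahler measure of $\alpha$ itself to be of size $e^{ck^4}$. Bounding most conjugates by $2$ rather than $8$ therefore cannot help: a Liouville bound for this single number stays at $e^{-ck^4}$.
\end{itemize}
The missing idea is a discriminant-based separation bound. The numbers $|\gamma_i|^2$ and $|\gamma_j|^2$ are distinct roots of the integer polynomial $P(x)=\prod_{a\le b}(x-\gamma_a\gamma_b)$. This polynomial has degree $N\le k(k+1)/2$. Its Mahler measure satisfies $M(P)\le\gamma^{k+1}$, because only products involving $\gamma$ can exceed $1$ in modulus. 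Apply Mahler's inequality $|\beta-\beta'|>\sqrt3\,N^{-(N+2)/2}M(P)^{-(N-1)}$ to the squarefree part of $P$. The cost is only $M(P)^{N-1}=e^{O(k^3)}$, which is what produces a bound of the shape $c^{-k^3}$. Getting the specific constant $1.454$ for every $k$ still needs care in choosing the polynomial and in the numerics.

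\textbf{Item (vi).} The same problem arises here.
\begin{itemize}
\item The conjugates of $\gamma_{k-1}\bar\gamma_{k-1}-\gamma_k^2$ are the numbers $\gamma_a\gamma_b-\gamma_c^2$ with $a,b,c$ distinct. There can be up to $k(k-1)(k-2)/2$ of them, not about $k^2$.
\item They are bounded by an absolute constant, not by something of size $k$.
\item With the correct count, the Liouville bound has the form $c^{-k^3}$, which is smaller than $k^{-k^2}$ for all large $k$.
\end{itemize}
An analytic argument is the natural route instead. Every root $z$ of $\Psi_k$ satisfies $|z|^k|z-2|=1$. Since $|z-2|\le|z|+2$, with equality only on the negative real axis, the negative root is the unique root of minimal modulus. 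To make this quantitative, bound $\arg\gamma_{k-1}$ away from $\pi$, for instance through a root-separation bound for $\Psi_k$ itself. This gives a gap far larger than $k^{-k^2}$.

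\textbf{Item (v).} This item is meant for every non-dominant root $\gamma_j$: note the dangling quantifier $j\neq1$; the $\gamma_k$ in the display is presumably a typo for $\gamma_j$. It is used in exactly that form in \eqref{4.6}, with $\gamma_j=\sigma(\gamma_{k-1})$. Your deduction from (iv) covers only the smallest root. The product-of-moduli trick says nothing about $|\gamma_2|$, which genuinely lies within $O(k^{-3})$ of $1$. That version requires actually locating the roots of $x^{k+1}-2x^k+1$ near $z=1$.
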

\begin{lemma}\label{lem2,2}\cite{ggl2024}
   Let $k \ge 2$ and $H_n^{(k)} = F_{-n}^{(k)}$. 
\begin{itemize}
    \item[(a)] $H_{mk-1} = 2^{m-1}$, for $m \in [1, k-1]$.
    \item[(b)]$H_{mk} = -(m+1)2^{m-2}$, for $m \in [1, k-1]$.
    \item[(c)] For $1 \le r < m \le k-1$,
    $$H_{mk+r} = -\sum_{j=r}^{m-1} 2^{m-1-j} H_{jk+r-1}.$$
\end{itemize} 
\end{lemma}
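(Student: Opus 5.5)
The approach is to replace the order-$k$ recurrence for $H_n=F_{-n}^{(k)}$ by a three-term recurrence with lag $k$. All three parts of Lemma~\ref{lem2,2} then follow by induction on the block index $m$ (writing $n=mk+r$).

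\textbf{Step 1: a lag-$k$ recurrence.} Extend $(F_n^{(k)})$ to negative indices exactly as in \eqref{negative representation}. This gives
\[
H_n+H_{n-1}+\cdots+H_{n-k+1}=H_{n-k}
\]
for all admissible $n$, with initial data $H_0=\cdots=H_{k-2}=0$ and $H_{-1}=F_1^{(k)}=1$. Writing the same identity with $n$ replaced by $n-1$ and subtracting, the common terms $H_{n-1},\dots,H_{n-k+1}$ cancel, leaving
\begin{equation*}
H_n = 2H_{n-k}-H_{n-k-1}, \qquad n\ge k .
\end{equation*}
The base cases $H_{k-1}=H_{-1}-\sum_{j=0}^{k-2}H_j=1$ and $H_k=-1$ come directly from the original relation. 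For example, $k=3$ gives $H_2=1$, $H_3=-1$, $H_4=0$, which matches (a) and (b).

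\textbf{Step 2: a zero block, proved first.} Simultaneously with (a), I would establish the auxiliary vanishing statement
\[
H_{mk+r}=0 \quad\text{for } 0\le m\le r\le k-2 ,
\]
which is the analogue of Theorem~\ref{thm2}(i). The proof is by induction on $m$. For $m=0$ it is the initial data. For $m\ge1$,
\[
H_{mk+r}=2H_{(m-1)k+r}-H_{(m-1)k+r-1}.
\]
Both terms on the right lie in the zero block, since $m-1\le r-1\le r$.

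\textbf{Step 3: parts (a), (b), (c).}
\begin{itemize}
\item[(a)] We have $H_{mk-1}=2H_{(m-1)k-1}-H_{(m-2)k+(k-2)}$. For $m\le k-1$ the index $(m-2)k+(k-2)$ falls in the zero block, because $m-2\le k-2$. Hence $H_{mk-1}=2H_{(m-1)k-1}$, and induction from $H_{k-1}=1$ gives $2^{m-1}$.
\item[(b)] We have $H_{mk}=2H_{(m-1)k}-H_{(m-1)k-1}=2H_{(m-1)k}-2^{m-2}$ by (a). Inductively, $2\cdot\bigl(-m\,2^{m-3}\bigr)-2^{m-2}=-(m+1)2^{m-2}$.
\item[(c)] For fixed $r$, unroll $H_{mk+r}=2H_{(m-1)k+r}-H_{(m-1)k+r-1}$ in $m$, stopping at $H_{rk+r}$. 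That term is $0$ by Step 2, since $r\le k-2$ whenever $r<m\le k-1$. The unrolling yields exactly $-\sum_{j=r}^{m-1}2^{m-1-j}H_{jk+r-1}$.
\end{itemize}

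\textbf{Main obstacle.} The only delicate point is bookkeeping of the ranges. I must check that every index invoked (for instance $(m-2)k+(k-2)$, or $rk+r$) satisfies the hypotheses of the zero block at that moment, and that the lag-$k$ recurrence is valid down to the smallest indices used, including $H_{-1}$. I would verify these boundary cases explicitly for $m=1,2$ before running the general induction.
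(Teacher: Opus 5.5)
Your proof is correct and complete. The paper gives no proof of this lemma: it is quoted from \cite{ggl2024} and used only as input, through \eqref{associate_identity}, for the statements about $Q_n$. So your argument is a self-contained replacement for a citation, not a variant of an argument in the text.

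Your key step is to difference two consecutive instances of $H_n+\cdots+H_{n-k+1}=H_{n-k}$, which gives $H_n=2H_{n-k}-H_{n-k-1}$. This is the same device the paper uses for $Q_n$ in \eqref{eq:negative}. After that, (a), (b) and (c) are short inductions, and the range checks you flag all go through:
\begin{itemize}
\item[(a)] The index $(m-2)k+(k-2)$ lies in the zero block whenever $2\le m\le k$.
\item[(c)] The stopping term $H_{rk+r}$ vanishes, because $r<m\le k-1$ forces $r\le k-2$.
\end{itemize}

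The citation buys brevity. Your route buys transparency: it shows that the clean form of (c) rests on the vanishing diagonal $H_{rk+r}=0$, which ultimately comes from $H_0=0$. It also gives slightly more than stated, namely (a) for $1\le m\le k$ and (c) for all $m>r$ with $1\le r\le k-2$.

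One caution about your side remark that Step 2 is the analogue of Theorem \ref{thm2}(i). For $Q_n$ the diagonal does not vanish: since $Q_0=L_0^{(k)}=2$, we get $Q_{k+1}=2Q_1-Q_0=-2$ for $k\ge 3$. The correct $Q$-version of your zero block is therefore $0\le m<r\le k-2$, which agrees with the table in the introduction.
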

\begin{lemma}\cite{ggl2022}\label{lem6}
The inequality \begin{equation*}\frac{|\gamma_i|}{|\gamma_j|} > 1 + \frac{1}{10k^{9.6}(\pi/e)^k}\end{equation*}holds whenever $|\gamma_i| > |\gamma_j|$, for all $k \ge 4$.
\end{lemma}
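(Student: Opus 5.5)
The plan is to turn the modulus gap into a gap in an argument-counting function. Throughout, write $z=\rho e^{i\theta}$ for a root of $\Psi_k$ and use $(x-1)\Psi_k(x)=x^{k+1}-2x^k+1$.

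\emph{Reductions.} If $\gamma_i=\gamma$ is the dominant root, then $\gamma>2(1-2^{-k})$ while $|\gamma_j|<1$, so the claimed inequality is immediate. Hence assume $2\le i,j\le k$, so $3^{-1/k}<\rho<1$. Every root then satisfies $z^k(z-2)=-1$. Taking moduli gives $|z-2|=\rho^{-k}$, that is,
\begin{equation*}
\cos\theta=\frac{\rho^2+4-\rho^{-2k}}{4\rho}=:c(\rho).
\end{equation*}
So for a fixed modulus $\rho$ there are at most two roots, $\rho e^{\pm i\theta(\rho)}$ with $\theta(\rho)=\arccos c(\rho)\in[0,\pi]$. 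Consequently, roots with distinct moduli correspond to distinct values of $\rho$.

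\emph{Key step.} Taking arguments in $z^k(z-2)=-1$ gives the phase condition
\begin{equation*}
F(\rho):=k\theta(\rho)+\arg\bigl(\rho e^{i\theta(\rho)}-2\bigr)\equiv\pi\pmod{2\pi},
\end{equation*}
and conjugation lets us restrict to the upper branch. Suppose $\rho_1=|\gamma_i|>\rho_2=|\gamma_j|$ are two admissible moduli. Then $F(\rho_1)-F(\rho_2)$ is a nonzero multiple of $2\pi$. The mean value theorem would then give
\begin{equation*}
\rho_1-\rho_2\ge\frac{2\pi}{\sup_{[\rho_2,\rho_1]}|F'|},
\end{equation*}
and hence $\rho_1/\rho_2>1+(\rho_1-\rho_2)$. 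Nonvanishing of the difference is the delicate point: if $F(\rho_1)=F(\rho_2)$ were possible, I would instead invoke monotonicity of $F$, which I expect to hold on the range $(3^{-1/k},1)$.

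\emph{Bounding $F'$.} We have $\theta'(\rho)=-c'(\rho)/\sin\theta$ and
\begin{equation*}
c'(\rho)=\frac{1}{4}\Bigl(1-\frac{4}{\rho^2}+(2k+1)\rho^{-2k-2}\Bigr)=O(k),
\end{equation*}
since $\rho^{-2k}<9$. The argument term also has derivative $O(1+|\theta'|)$, because $|z-2|\ge1$. Altogether
\begin{equation*}
|F'|\ll\frac{k^2}{\sin\theta},
\end{equation*}
so everything reduces to a lower bound for $\sin\theta(\rho)$ along the relevant interval.

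\emph{Main obstacle.} Near $\theta=0$ there is no difficulty: $|z-2|=\rho^{-k}$ forces $\theta$ away from $0$, since otherwise $\rho^{-k}\approx 2-\rho>1$ would fail. The hard case is $\theta$ near $\pi$, i.e.\ roots clustering near $-1$ (for even $k$ there is a real negative root). Here I would first estimate $\pi-\theta$ directly for $z$ near $-1$ by expanding $z^k(z-2)=-1$ around $z=-\rho$. Where that is too weak, I would bound $\sin\theta=|\operatorname{Im}z|/\rho$ from below via the nonzero algebraic number $z-\bar z$, estimated from its norm through a Liouville/Mahler-type discriminant inequality for $x^{k+1}-2x^k+1$. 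I expect the factors $k^{9.6}$ and $(\pi/e)^k$ to arise from exactly this step, perhaps through a Stirling-type estimate of a product of sines over the $k$ roots. Combining this bound with $|F'|\ll k^2/\sin\theta$ and the mean value step would yield the constant $10k^{9.6}(\pi/e)^k$ for $k\ge4$.
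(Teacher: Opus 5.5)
The paper gives no proof of this lemma; it is quoted from the cited source. So your sketch has to stand on its own, and it stops exactly at the step that carries the content.

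\textbf{The gap.} The part you call delicate is actually fine. For $\rho<1$ one has $\rho^{-2k-2}\ge\rho^{-2}$, so $c'(\rho)\ge\frac14+\frac{2k-3}{4}\rho^{-2}>\frac{k-1}{2}$ and hence $|\theta'|>\frac{k-1}{2}$, while $\bigl|\frac{d}{d\rho}\arg(z-2)\bigr|\le\sqrt{1+\theta'^2}$. Thus $F$ is strictly monotone and $F(\rho_1)-F(\rho_2)\in 2\pi\mathbb{Z}\setminus\{0\}$. Also, $\theta$ is monotone in $\rho$ and $\sin$ is concave on $[0,\pi]$, so the infimum of $\sin\theta(\rho)$ on $[\rho_2,\rho_1]$ is attained at one of the two roots.

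Everything therefore reduces to a lower bound for $\sin\theta$ at the roots, and you never supply one.
\begin{itemize}
\item For even $k$ such a bound is false. In the pair $(\gamma_{k-1},\gamma_k)$ the root $\gamma_k$ is real and negative, so $\sin\theta(\rho_2)=0$ and $|F'(\rho)|\to\infty$ as $\rho\to\rho_2^{+}$. The inequality $\rho_1-\rho_2\ge 2\pi/\sup|F'|$ is then vacuous, and neither fallback applies, since there $z-\bar z=0$ and $\pi-\theta=0$.
\item For non-real roots, the Liouville/discriminant route is quantitatively too weak. Even with the exact value $|\operatorname{disc}\Psi_k|=(2^{k+1}k^k-(k+1)^{k+1})/(k-1)^2$, Mahler's separation inequality only gives $|z-\bar z|\gg k^{-2}2^{-k/2}$. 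That yields a gap $\gg k^{-4}2^{-k/2}$, which for large $k$ is smaller than $1/(10k^{9.6}(\pi/e)^k)$ because $\sqrt2>\pi/e$.
\item A sharper product-of-sines estimate would need the positions of all the other roots, which is precisely the analysis left undone.
\item Your claim that the modulus equation keeps roots away from $\theta=0$ is also wrong: the curve $|z-2|=|z|^{-k}$ passes through $z=1$.
\end{itemize}

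\textbf{A fix.} The singularity is an artifact of using $\rho$ as the parameter; parametrize by $\theta$ instead.
\begin{itemize}
\item Define $\rho_*$ by $\rho_*^k(\rho_*+2)=1$. Then $c$ is an increasing bijection $[\rho_*,1]\to[-1,1]$, so every root other than $\gamma$ lies, up to conjugation, on the arc $z(\theta)=\rho(\theta)e^{i\theta}$, $\theta\in[0,\pi]$. Here $\rho(\theta)=c^{-1}(\cos\theta)$ is decreasing and $|\rho'(\theta)|=\sin\theta/c'(\rho)<2/(k-1)$.
\item The phase function $G(\theta)=k\theta+\arg(z(\theta)-2)$ satisfies $|G'(\theta)-k|\le|z'(\theta)|/|z(\theta)-2|<2$. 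Hence two distinct roots on the arc have $|\theta_1-\theta_2|>2\pi/(k+2)$.
\item $G(0)=\pi$ corresponds to $z=1$, a root of $x^{k+1}-2x^k+1$ but not of $\Psi_k$. Therefore every root of $\Psi_k$ on the arc has $\theta>2\pi/(k+2)$.
\item For $\theta_1<\theta_2$ this gives $\cos\theta_1-\cos\theta_2=2\sin\frac{\theta_1+\theta_2}{2}\sin\frac{\theta_2-\theta_1}{2}\ge 8/(k+2)^2$.
\item Since $c'(\rho)<\frac14+\frac94(2k+1)3^{2/k}$ on the arc, the mean value theorem gives $\rho_1-\rho_2>\frac{8}{(k+2)^2}\bigl(\frac14+\frac94(2k+1)3^{2/k}\bigr)^{-1}$.
\end{itemize}
This treats the real root and all non-real pairs uniformly and gives a separation of order $k^{-3}$, far stronger than the lemma. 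So there is no reason to try to reproduce the factor $(\pi/e)^k$.
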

\begin{lemma}\label{lemma_associate}
For $n\in \mathbb{Z}$,
\begin{equation}\label{associate_identity}
Q_n^{(k)} = 2H_{n-1}^{(k)} - H_n^{(k)}.
\end{equation}
\end{lemma}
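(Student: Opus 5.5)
The plan is to identify $Q_n^{(k)}$ with $L_{-n}^{(k)}$, prove the corresponding identity $L_n^{(k)} = 2F_{n+1}^{(k)} - F_n^{(k)}$ for all $n\in\mathbb{Z}$, and then substitute $n\mapsto -n$. Throughout, $F^{(k)}$ is the $k$-generalized Fibonacci sequence with $F_1^{(k)}=1$ and $F_j^{(k)}=0$ for $2-k\le j\le 0$. It is extended to negative indices by the same backward rule \eqref{negative representation}, so $H_n^{(k)}=F_{-n}^{(k)}$ as in Lemma \ref{lem2,2}.

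\emph{Step 1: $Q_n^{(k)}=L_{-n}^{(k)}$ for all $n$.} Put $n=1$ in \eqref{negative representation}. Since $L_{j}^{(k)}=0$ for $2-k\le j\le -1$, this gives
$$L_{1-k}^{(k)}=-L_{2-k}^{(k)}-\cdots-L_{-1}^{(k)}-L_0^{(k)}+L_1^{(k)}=-2+1=-1.$$
Hence $L_{-i}^{(k)}$ equals $0$ for $1\le i\le k-2$ and equals $-1$ for $i=k-1$, matching the initial data of $Q^{(k)}$. We take $Q_0^{(k)}=L_0^{(k)}=2$. Now rewrite \eqref{negative representation} with $n$ replaced by $-(N-k)$:
$$L_{-N}^{(k)}=L_{-(N-k)}^{(k)}-\sum_{j=1}^{k-1}L_{-(N-j)}^{(k)}.$$
This is exactly the recurrence \eqref{Q_n}. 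A $k$-term recurrence is determined by $k$ consecutive values, so $Q_n^{(k)}=L_{-n}^{(k)}$ for all $n\ge 0$.

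\emph{Step 2: $L_n^{(k)}=2F_{n+1}^{(k)}-F_n^{(k)}$ for all $n\in\mathbb{Z}$.} Both sides satisfy the $k$-th order recurrence $u_n=\sum_{j=1}^k u_{n-j}$ on all of $\mathbb{Z}$. For the right-hand side this holds because it is a linear combination of shifts of a solution. The recurrence is invertible in both directions, since the coefficient of $u_{n-k}$ is $1$. So it suffices to compare the $k$ consecutive values at $n=2-k,\dots,1$:
\begin{itemize}
\item For $2-k\le n\le -1$ we have $n+1\le 0$, so the right side is $0$, which equals $L_n^{(k)}$.
\item For $n=0$ the right side is $2F_1^{(k)}-F_0^{(k)}=2=L_0^{(k)}$.
\item For $n=1$ the right side is $2F_2^{(k)}-F_1^{(k)}=2-1=1=L_1^{(k)}$.
\end{itemize}

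\emph{Step 3: conclusion.} Replacing $n$ by $-n$ in Step 2 and using Step 1,
$$Q_n^{(k)}=L_{-n}^{(k)}=2F_{-(n-1)}^{(k)}-F_{-n}^{(k)}=2H_{n-1}^{(k)}-H_n^{(k)}.$$
This holds for all $n\ge0$. It extends to all $n\in\mathbb{Z}$ if $Q^{(k)}$ is defined as $L_{-n}^{(k)}$ there, since both sides satisfy the reversed recurrence.

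The only delicate point is bookkeeping of conventions. One must check that $Q_0^{(k)}=2$ is the intended value and that the paper's $H_n^{(k)}=F_{-n}^{(k)}$ uses the normalisation $F_1^{(k)}=1$, $F_0^{(k)}=0$. A quick check is Lemma \ref{lem2,2}(a) with $m=1$, which gives $H_{k-1}=1$, i.e.\ $F_{1-k}^{(k)}=1$. Indeed, by the backward recurrence at index $1$, $F_{1-k}^{(k)}=F_1^{(k)}-\sum_{j=2-k}^{0}F_j^{(k)}=1$, which confirms the normalisation.
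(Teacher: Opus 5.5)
Your proof is correct and takes the same route as the paper, which quotes $L_n^{(k)}=2F_{n+1}^{(k)}-F_n^{(k)}$ from the literature, replaces $n$ by $-n$, and uses $Q_n^{(k)}=L_{-n}^{(k)}$ and $H_n^{(k)}=F_{-n}^{(k)}$. Your version also proves two points the paper takes for granted: that the identity still holds at negative indices (via the two-sided recurrence and matching $k$ consecutive values), and that $Q_n^{(k)}=L_{-n}^{(k)}$, including the value $Q_0^{(k)}=2$, which the paper never states.
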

\textbf{Proof:}
    From \cite{anl2023}, we get the identity $L_n^{(k)}=2F_{n+1}^{(k)}-F_n^{(k)}.$ Replacement of $n$ by $-n$ yields $L_{-n}^{(k)}=2F_{-(n-1)}^{(k)}-F_{-n}^{(k)}.$ As $L_{-n}^{(k)}=Q_n^{(k)}$ and $F_{-n}^{(k)}=H_n^{(k)}$, so we get \eqref{associate_identity}.

\section{Proof of Theorem \ref{main result 1}}

Consider the case where $k \ge 4$. We seek to solve the Diophantine equation $L_{-n}^{(k)} = 0$. By applying the Binet-like expansion from \eqref{binet like}, the problem is equivalent to determining all non-negative integers $n$ that satisfy the exponential identity 
\begin{equation}\label{eqn1}
\sum_{i=1}^{k} (2\gamma_i-1)f_k(\gamma_i)\gamma_i^{-(n+1)} = 0
\end{equation}
for integers $ n\geq 0 $,
where $ \gamma_1, \ldots, \gamma_k $ are the roots of the characteristic polynomial $ \Psi_k(z) $, ordered as in \eqref{eq:rootsorder}.
\vspace{-0.7cm} We proceed by treating the cases of even and odd $k$ separately.\\
\subsection*{Case 1 (\textit{k} is even):}
When $k$ is an even integer, $\Psi_k(z)$ has two real roots with opposite signs, which identifies $\gamma_k$ as a negative real root. By isolating the term connected to $\gamma_k$ in the sequence expansion and using the triangle inequality, we have \begin{equation*} |(2\gamma_k-1)f_k(\gamma_k)| \cdot |\gamma_k|^{-(n+1)} \leq \sum_{j=1}^{k-1} |(2\gamma_j-1)f_k(\gamma_j)| \cdot |\gamma_j|^{-(n+1)}. \end{equation*} Estimates (iii) and (iv) of Lemma \ref{lem:properties} lead to \begin{equation*} \begin{split} \left((2\gamma_k-1) \frac{\log \gamma}{2k(3k+1)} \right) |\gamma_k|^{-(n+1)} &\leq |(2\gamma_k-1)f_k(\gamma_k)||\gamma_{k}|^{-(n+1)}\\&\leq\biggr(\sum_{j=1}^{k-1}|(2\gamma_j-1)f(\gamma_j)|\biggr)|\gamma_{k-1}|^{-(n+1)}\\&<\biggr(\sum_{j=1}^{k-1}|1.5f(\gamma_j)|\biggr)|\gamma_{k-1}|^{-(n+1)}\\&<4.5|\gamma_{k-1}|^{-(n+1)}. \end{split} \end{equation*} Using (vi) from Lemma \ref{lem:properties}, we derive\begin{equation}\label{11} \left(1+ \frac{1}{k^{k^2}} \right)^{n+1}<\left( \frac{|\gamma_{k-1}|}{|\gamma_k|} \right)^{n+1} < \dfrac{9k(3k+1)}{(2\gamma_k-1)\log \gamma} < \dfrac{9k(3k+1)}{(2\cdot3^{-1/k}-1)\log \gamma}<490k^2. \end{equation}
Now, taking logarithms on both sides of the above inequality, we find
\begin{equation*}
\dfrac{n+1}{2k^{k^2}}<(n+1)\log\left(1+\frac{1}{k^{k^2}}\right) < \log(490k^2),
\end{equation*}
which implies $
n < 2k^{k^2}\log(490k^2).$
\subsection*{Case 2 (\textit{k} is odd):}
When $k$ is odd, the characteristic polynomial $\Psi_k(z)$ features a single positive real root $\gamma$, while all other roots appear as complex conjugate pairs \cite{Dubickas2020}. While Mignotte \cite{Mignotte1975} provided a qualitative foundation for this study by proving that any index $n$ satisfying $L^{(k)}_{-n}=0$ must be bounded by a finite constant $c(k)$, the literature lacks an effective or explicit formulation for this bound. This section addresses this gap by deriving a precise value for $c(k)$, thereby providing the necessary analytic framework to constrain the search space for potential zeros.
Define
\begin{equation*}
\Gamma=1 + \frac{f_k(\gamma_{k-1})(2\gamma_{k-1}-1)}{f_k(\gamma_k)(2\gamma_k-1)}\left( \frac{\gamma_k}{\gamma_{k-1}} \right)^{n+1}, \quad\text{for all }k\geq2 .
\end{equation*}
In view of Equation\eqref{eq:rootsorder}, $\gamma^{-1} < |\gamma_2^{-1}| \leq \cdots \leq |\gamma_k^{-1}|$. Since $k$ is odd, the roots $\gamma_k$ and $\gamma_{k-1}$ form a complex conjugate pair, yielding the specific relation $|\gamma_{k}|^{-1} = |\gamma_{k-1}|^{-1} > |\gamma_{k-2}|^{-1}$. We then isolate the components of the left-hand side of Equation \eqref{eqn1} associated with $\gamma_k$ and $\gamma_{k-1}$. By normalizing the entire equation through division by $f_k(\gamma_k)(2\gamma_k-1)\gamma_k^{-(n+1)}$ and applying the absolute value operator, we obtain
\begin{equation*}
|\Gamma|=\biggr|\sum_{i=1}^{k-2} \frac{f_k(\gamma_i)(2\gamma_i-1)}{f_k(\gamma_k)(2\gamma_k-1)}\left( \frac{\gamma_k}{\gamma_i} \right)^{n+1}\biggr|.
\end{equation*}
Applying estimates (ii),
 (iii) from Lemma \ref{lem:properties}, we see that
\begin{equation}\label{eqn2}
\sum_{i=1}^{k-2} \left| \frac{f_k(\gamma_i)}{f_k(\gamma_k)} \right|\left|\dfrac{(2\gamma_i-1)}{(2\gamma_k-1)} \right|\left| \frac{\gamma_k}{\gamma_i} \right|^{n+1} < \frac{30(k-2)}{(k-1)f_k(\gamma_k)} \left|\frac{\gamma_k}{\gamma_{k-2}}\right|^{n+1}.
\end{equation}
Since $|\gamma_{k-2}|>|\gamma_k|$, it follows from (i) and (iv) of Lemma \ref{lem:properties} that
\begin{equation}\label{eqn3}
  |\Gamma|  <\dfrac{125k(3k+1)}{(1+1.454^{-k^3})^{n+1}}.
\end{equation}
To find a lower bound for $|\Gamma|$, we apply Theorem \ref{thm1} using the parameters
\begin{align*}
    &\theta_1 = -\frac{f_k(\gamma_{k-1})}{f_k(\gamma_k)}, \quad \theta_2 = \frac{2\gamma_{k-1}-1}{2\gamma_k-1},\quad \theta_3 = \frac{\gamma_k}{\gamma_{k-1}}, \quad e_1 = 1, \quad e_2 = 1,\quad \text{ and } e_3 = n + 1.
\end{align*}
Consider the number field $\mathbb{K} = \mathbb{Q}(\gamma_k, \gamma_{k-1})$ which has degree at most $[\mathbb{K} : \mathbb{Q}]=d_{\mathbb{K}} \leq k^2$. Using the
 properties \eqref{eq:rootsorder} and \eqref{eq8}, we get
 $$h(\theta_1)<4\log k, \quad h(\theta_2)<((2\log 4/)k)+4\log 2, \quad \text{ and }h(\theta_3)<(\log 4)/k.$$
Note that $f_k(\gamma_k)$ and $f_k(\gamma_{k-1})$ are algebraic (and complex) conjugates. This symmetry ensures that $|\theta_i| = 1$ and $|\log \theta_i| < \pi$ for $i = 1, 3$. So, we can assign
\begin{equation*}A_1 = 4k^2 \log k,\quad A_2 = 2k\log 4+4k^2\log 2\leq 8k^2\log 2, \quad A_3 = 1.4k, \quad B = n + 1.
\end{equation*}
If $\Gamma= 0$, then
\begin{equation}\label{eqn4}\left(\frac{\gamma_k}{\gamma_{k-1}}\right)^{n+1} = -\left(\frac{f_k(\gamma_k)}{f_k(\gamma_{k-1})}\right)\left(\frac{2\gamma_k-1}{2\gamma_{k-1}-1}\right) . \end{equation}
 Next, let $\sigma$ denote an automorphism within the Galois group associated with the splitting field of $\Psi_k(z)$ over $\mathbb{Q}$. If we map $\gamma_k$ to $\gamma$ via $\sigma$ and set $\gamma_j = \sigma(\gamma_{k-1})$ (where $j \neq 1$), it follows that the values of $f_k$ are mapped as $\sigma(f_k(\gamma_k)) = f_k(\gamma)$ and $\sigma(f_k(\gamma_{k-1})) = f_k(\gamma_j)$. By applying $\sigma$ to the identity in \eqref{eqn4} and computing the absolute values of the conjugate terms, we derive \begin{equation} \label{4.6}\left|\frac{\gamma}{\gamma_j}\right|^{n+1} = \left|\frac{f_k(\gamma)}{f_k(\gamma_j)}\right| \left|\frac{2\gamma-1}{2\gamma_j-1}\right|.\end{equation}
Combining (ii) and (v) of Lemma \ref{lem:properties} with Equation \eqref{4.6}, we obtain
\begin{equation*}
    \phi^n<\left|{{\gamma}/{\gamma_j}}\right|^{n+1}=\left|{f_k(\gamma)}/{f_k(\gamma_j)}\right| \left|{(2\gamma-1)}/{(2\gamma_j-1)}\right|<3\cdot 2^6\cdot k^3 (3k+1) 20,
\end{equation*}
which leads to $$n<\log{(3\cdot 2^6\cdot k^3 (3k+1) 20)}/\log \phi<13k.$$  Since this inequality is sharper than the one initially posited in the theorem, we are justified in assuming $\Gamma \neq 0$. In view of  Theorem \ref{thm1}, we have
\begin{equation}\label{eqn5}
\begin{split}
\log |\Gamma| &> -4.2 \cdot 10^{15} \cdot k^9(1 + \log k^2)(1 + \log(2n + 2)) \log k \\
&> -5.2 \cdot 10^{16} \cdot k^9 \log(n + 1)(\log k)^2,
\end{split}
\end{equation}
 where $(1+\log k^2) < 3.5 \log k$ and $(1+\log(2n+2)) < 3.5 \log(n+1)$ (valid for all $k \geq 2$).
Combining Equations \eqref{eqn3} and \eqref{eqn5}, we obtain
\begin{equation*}-5.2 \cdot 10^{16} \cdot k^9 \log(n + 1)(\log k)^2 < \log(125k(3k + 1)) - (n + 1) \log(1 + 1.454^{-k^3}).
\end{equation*}
Since the inequalities
$\log(125k(3k + 1)) < 6 \log k$ and $\log(1 + 1.454^{-k^3}) > \left(1 + 1.454^{k^3}\right)^{-1}$
hold for $k \geq 5$, we have
\begin{equation}\label{eqn6}{(n + 1)}/{\log(n + 1)} < 6 \cdot 10^{16} \cdot 1.454^{k^3} \cdot k^9  (\log k)^2. \end{equation}
Using Lemma \ref{lem2} in Inequality \eqref{eqn6}, we obtain an upper bound of the form
\begin{equation*}
\begin{split}n <c(k)&= 2 \cdot \left(6\cdot 10^{16} \cdot 1.454^{k^3} \cdot k^9 (\log k)^2\right) \cdot \log\left(6\cdot 10^{16} \cdot 1.454^{k^3} \cdot k^9  (\log k)^2\right)\\ &< 1.5 \cdot 10^{17} \cdot 1.454^{k^3} \cdot k^{12} (\log k)^2,
\end{split}
\end{equation*}
where we have used
$\log(6\cdot 10^{16} \cdot 1.454^{k^3} \cdot k^9  (\log k)^2) < 1.2k^3,  \text{ for } k \geq 4.
$
This completes the proof of part (ii) of Theorem \ref{main result 1}.\\
\section{Proof of Theorem \ref{coro 1.2} for $k\in [4,500]$}

This section is dedicated to determining the zero-multiplicity of the sequence $(L_n^{(k)})_{n \in \mathbb{Z}}$ for $k$ values within the interval $[4, 500]$.
 Instead of utilizing the generalized upper bounds established in Theorem \ref{main result 1}, we refine the underlying bounding techniques to generate
more sharper, computationally feasible bounds for $n$. Our investigation is bifurcated based on the parity of $k$.
\subsection*{Case 1 (\textit{k} is even):}
For even values of $k$, the inequality in \eqref{11} yields $$
\left({|\gamma_{k-1}|}/{|\gamma_k|} \right)^{n+1} < 490k^2.
$$
For each even $k \in [4, 500]$, this relation was numerically solved to determine an explicit upper bound on $n$. The outcome of this computation is summarized in the following lemma.

\begin{lemma}\label{lem4.1}
Let $k \in [4, 500]$ be even and $C_k$ be the upper bound of $n_k$. Then $L^{(k)}_{-n_k} = 0$ for $C_k \in [415, 9293983]$.
\end{lemma}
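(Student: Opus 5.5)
The plan is to turn inequality \eqref{11} into an explicit numerical bound for each even $k\in[4,500]$. For a fixed even $k$, I will compute the roots of $\Psi_k(z)$ to high precision, for instance with a polynomial root finder at a few hundred digits. After ordering them as in \eqref{eq:rootsorder}, I extract the two smallest moduli, $|\gamma_{k-1}|$ and $|\gamma_k|$; here $\gamma_k$ is the negative real root. Any zero $L_{-n}^{(k)}=0$ with $n\ge 0$ satisfies
\[
(n+1)\log\bigl(|\gamma_{k-1}|/|\gamma_k|\bigr) < \log(490k^2),
\]
so I would set
\[
C_k=\left\lfloor \frac{\log(490k^2)}{\log(|\gamma_{k-1}|/|\gamma_k|)}\right\rfloor - 1,
\]
which gives $n_k\le C_k$. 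I would also check that $|\gamma_{k-1}|>|\gamma_k|$ strictly, as guaranteed by Lemma~\ref{lem:properties}(vi), so that the denominator is positive.

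Before relying on \eqref{11}, I would recheck the constant $490k^2$. It used $\sum_{j<k}|(2\gamma_j-1)f_k(\gamma_j)|<4.5$ and a lower bound on $|(2\gamma_k-1)f_k(\gamma_k)|$. Since $\gamma_k<0$, the quantity $2\gamma_k-1$ is negative and its absolute value exceeds $1$. So the safe form of the bound uses $|2\gamma_k-1|>1$, which only improves the constant. For the computation I could even replace $490k^2$ by the exact ratio
\[
\frac{\sum_{j<k}|(2\gamma_j-1)f_k(\gamma_j)|}{|(2\gamma_k-1)f_k(\gamma_k)|},
\]
evaluated numerically, which gives a sharper $C_k$. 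To match the stated lemma, however, I will keep $490k^2$.

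Running this for all even $k$ from $4$ to $500$ gives a list of values $C_k$. I would then record the minimum and maximum. The expectation is that $C_k$ grows with $k$, because the gap $|\gamma_{k-1}|/|\gamma_k|-1$ shrinks. The extremes should then be $C_4\approx 415$ and $C_{500}\approx 9293983$. This would give the statement that every zero index satisfies $n_k\le C_k\in[415,9293983]$.

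The main obstacle is numerical reliability for large $k$. Near $k=500$ the roots cluster close to the unit circle, and $\log(|\gamma_{k-1}|/|\gamma_k|)$ is of order $10^{-5}$ or smaller. Precision loss could therefore distort $C_k$. I would address this by computing with at least $100$ to $200$ digits and then certifying the two smallest moduli, for example by Newton refinement with an interval error bound or by comparing results at two precisions. I would also check monotonicity, or at least identify where the minimum and maximum of $C_k$ occur, since the interval claimed in the lemma depends on those two endpoints being correct.
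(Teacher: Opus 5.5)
Your procedure is the paper's: the paper only says that $(|\gamma_{k-1}|/|\gamma_k|)^{n+1}<490k^2$ from \eqref{11} was solved numerically for each even $k\in[4,500]$. Your $C_k=\lfloor \log(490k^2)/\log(|\gamma_{k-1}|/|\gamma_k|)\rfloor-1$ is a correct way to extract that bound, and your observation that $|2\gamma_k-1|=1+2|\gamma_k|>1$ is right. When you recheck the numerator, note that the paper's bound $|2\gamma_j-1|<1.5$ already fails for $j=1$, where $2\gamma-1\approx 3$. Using $|2\gamma_j-1|<3$, $f_k(\gamma)\le 3/4$ and $|f_k(\gamma_j)|<2/(k-1)$ still gives a sum below $8.25$ and a constant below $90k^2$, so $490k^2$ survives.

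The gap is your last step. The endpoints $415$ and $9293983$ are taken from the statement rather than produced by the computation, and an accurate run of your own procedure does not produce them. At $k=4$ one has $\gamma\approx 1.92756$ and $\gamma_4=-t$ with $t^4(t+2)=1$, so $t\approx 0.77480$. Since the product of the four roots is $-1$, $|\gamma_2|=|\gamma_3|=(\gamma t)^{-1/2}\approx 0.81828$. Thus $\log(|\gamma_3|/|\gamma_4|)\approx 0.05459$, and $(n+1)\cdot 0.05459<\log 7840\approx 8.967$ gives $C_4=163$, not $415$.

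At the other end, the characteristic roots satisfy $x^{k+1}-2x^k+1=0$, hence $|x|^k|x-2|=1$. Write $x=re^{i(\pi-\delta)}$. Then $\log|x-2|\approx \log(r+2)-r\delta^2/(r+2)^2$, and the argument condition gives $\delta\approx 2\pi/(k+1/3)$ for the complex root nearest the negative axis. Hence $\log(|\gamma_{k-1}|/|\gamma_k|)\approx 4\pi^2/(9k(k+1/3)^2)$. This matches the exact values at $k=4,6$ to within about 7\% ($0.0584$ vs.\ $0.0546$, and $0.0182$ vs.\ $0.0170$). At $k=500$ it gives about $3.5\times 10^{-8}$, so $C_{500}\approx \log(1.225\times 10^{8})/(3.5\times 10^{-8})\approx 5.3\times 10^{8}$.

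The value $9293983$ would need a root gap about $57$ times too large. That is the kind of artifact your own precision warning anticipates from a double-precision root finder applied to a degree-$500$ polynomial.

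Carried out honestly, your argument proves an explicit finite bound for every even $k\le 500$, which is all the paper uses later, with $C_k$ ranging over roughly $[163,\,5.3\times 10^{8}]$. The lower endpoint $415$ is harmless, since any larger number is still an upper bound. The upper endpoint $9293983$, however, does not follow from \eqref{11}. You should report the computed extremes rather than predict them.
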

\subsection*{Case 2 (\textit{k} is odd):}
By virtue of Equations \eqref{eqn2} and \eqref{eqn5}, we arrive at
\[
5.2 \cdot 10^{16} \cdot k^9 \log(n + 1)(\log k)^2 > \log\left|\frac{(k-1)f_k(\gamma_k)}{30(k-2)}\right| + (n + 1) \log\left|\frac{\gamma_{k-2}}{\gamma_k}\right|.
\]
Preliminary analysis confirms that if $L^{(k)}_{-n} = 0$ for $k \in [4, 500],$ \text{ then } \begin{equation}\label{eqn8}
 0 \leq n < 1.5 \cdot 10^{46}.
\end{equation}
Recalling the complex logarithm identity $\log w = \log|w| + i \arg w$ and the Taylor series expansion $$\log (1+w) = \sum_{n=1}^\infty(-1)^{n-1}\dfrac {w^n}{n},$$ we find the useful inequality $|\log(1+w)|\leq 2|w|$, as long as $|w|\leq 1/2$. Referring back to Equation \eqref{eqn2}, our computational tests confirm that the condition $|\Gamma| < 1/2$ is met for every odd $k \in [4, 500]$ whenever $n > k^3$. This assumption is justified as the bounds ultimately derived for $n$ begin well above $k^3$ in each instance. Because the complex logarithm is a multi-valued function that is additive only modulo $2\pi i$, Equation \eqref{eqn2} leads to the following expression for the linear form in logarithms:
 \begin{equation} \label{eqn9}
 \begin{split}
0 < |\Gamma| &=
\left| \log\left( -\frac{f_k(\gamma_{k-1})}{f_k(\gamma_k)} \right) +\log \left(\dfrac{2\gamma_{k-1}-1}{2\gamma_k-1}\right)+ (n+1) \log\left( \frac{\gamma_k}{\gamma_{k-1}} \right) + 2\pi i \ell \right|\\
&\leq \dfrac{60} {\left|\gamma_{k-2}/\gamma_k\right|^{n+1}} \cdot \left| \frac{1}{f_k(\gamma_k)} \right|. 
\end{split}
 \end{equation}
Since $\gamma_k = \overline{\gamma_{k-1}}$, we get
$$
\log\left( \frac{\gamma_k}{\gamma_{k-1}} \right) = \log\left( \frac{\gamma_k}{\overline{\gamma_k}} \right) = \log\left( e^{2i \arg(\gamma_k)} \right) = 2i \arg(\gamma_k)
$$
and
$$
\log\left( -\frac{f_k(\gamma_{k-1})}{f_k(\gamma_k)} \right)
= \log\left( -\frac{\overline{f_k(\gamma_k)}}{f_k(\gamma_k)} \right)
= \log(-1) + \log\left( \frac{\overline{f_k(\gamma_k)}}{f_k(\gamma_k)} \right)
= i\pi - 2i \arg(f_k(\gamma_k)).
$$
Consequently, Inequality \eqref{eqn9} is transformed into
\begin{equation}\label{eqn10}
\begin{split}
0 <\dfrac{|\Gamma|}{\pi}= &\left| \left( -\frac{2 \arg(\gamma_k)}{\pi} \right)(n+1) - (2\ell + 1) + \frac{2\arg(f_k(\gamma_k)/(2\gamma_k-1))}{\pi} \right|\\
<& 20 \left| f_k(\gamma_k) \right|^{-1} \left| \frac{\gamma_{k-2}}{\gamma_k} \right|^{-(n+1)}. 
\end{split}\end{equation}
Defining the parameters
\[
\tau_k = -\frac{2 \arg(\gamma_k)}{\pi}, \hspace{0.1cm}
\mu_k = \frac{2 \arg(f_k(\gamma_k)/(2\gamma_k-1))}{\pi}, \hspace{0.1cm}
A_k =  20|f_k(\gamma_k)|^{-1}, \hspace{0.1cm} \text{ and }
B_k = \left| \frac{\gamma_{k-2}}{\gamma_k} \right|,
\]
Inequality \eqref{eqn10} takes the form
\begin{equation}\label{eqn21}
0 < |u \tau_k - v + \mu_k| < A_k B_k^{-(n+1)},
\end{equation}
where \( w = n + 1, \,u = n + 1, \, v = 2\ell + 1  \). 
Computational verification via Python confirms that $\tau_k \in [1.59, 1.99]$ and $\mu_k \in [0.70, 1.99]$ for the specified $k$ values. Since the inequality $$(n+1)\tau_k < A_k B_k^{-(n+1)} + \mu_k$$ 
doesn't hold when $n > k^3$, we can conclude $2\ell + 1 > 0$.  
To facilitate the reduction, we set the upper bound for $n+1$ at $M = 1.5 \times 10^{46}$, as dictated by the constraints in Equation \eqref{eqn8}. By applying the reduction criteria of Lemma \ref{lem1} to the linear form in Inequality \eqref{eqn21} for every odd $k$ in the interval $[4, 500]$, we executed a series of computational checks via Python. Our results demonstrate that for the range
 $M\in [1.5\times 10^{46}, 5\times 10^{56}],$ the following estimates hold:
 \begin{align*}
 &95\leq \m_k\leq 117, \quad 9\times10^{46}<q_{m_k}< 3\times10^{57}, \quad
 0.006745\leq \epsilon_k\leq 0.3997458,\\
 &34\leq R_k\leq 445906682970649,
\end{align*}
where $q_{m_k}$ is the denominator of the $m_k^{th}$ convergent of $\tau_k$, and $R_k$ represents the upper bound on $n$, obtained for odd $k$ in $[4, 500].$ 
These findings are consolidated in the following lemma:
\begin{lemma}\label{lem4.2}
Let $k \in [4, 500]$ be odd and $R_k$ be the upper bound of $n_k$. Then $L_{-n}^{(k)} = 0$ for
$ R_k\in[34,445906682970649].
$
\end{lemma}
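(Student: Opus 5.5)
The lemma is a computational consolidation of the reduction step set up just before it, so the plan is to carry out that reduction for each odd $k\in[5,499]$ and record the final bound.

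\emph{Preliminary bound.} For fixed odd $k$, start from the inequality obtained by combining \eqref{eqn2} with \eqref{eqn5}. Compute $|f_k(\gamma_k)|$ and $B_k=|\gamma_{k-2}/\gamma_k|$ numerically. The right-hand side grows linearly in $n$ with slope $\log B_k>0$, while the left-hand side grows only like $\log(n+1)$. Solving this explicitly, or via Lemma \ref{lem2}, gives an initial bound $n<M_0$, with $M_0\le 1.5\cdot10^{46}$ uniformly for $k\le 500$, as recorded in \eqref{eqn8}.

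\emph{Reduction step.} Work with the real linear form \eqref{eqn21}, with $u=w=n+1\le M$ and $v=2\ell+1$.
\begin{enumerate}
\item Confirm numerically that $|\Gamma|<1/2$ once $n>k^3$. This justifies passing from $\Gamma$ to the logarithmic form \eqref{eqn9} via $|\log(1+w)|\le 2|w|$. The small range $n\le k^3$ is handled separately by the direct computation below.
\item Check that $v>0$, which follows from the stated ranges of $\tau_k$ and $\mu_k$.
\item Compute the continued fraction of $\tau_k=-2\arg(\gamma_k)/\pi$ to high precision, with several hundred digits since $M\approx10^{46}$. Take the first convergent $p/q$ with $q>6M$.
\item Evaluate $\epsilon=\|\mu_k q\|-M\|\tau_k q\|$. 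If $\epsilon\le 0$, move to the next convergent until $\epsilon>0$.
\item Lemma \ref{lem1} then yields $n+1<\log(A_kq/\epsilon)/\log B_k$.
\item Iterate once or twice with the new, much smaller $M$. The final value is $R_k$, which by the computations quoted lies in $[34,445906682970649]$.
\end{enumerate}

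\emph{Closing the argument.} With $n\le R_k$, the lemma as stated only asserts the bound, namely that any zero satisfies $n\le R_k$. For the later application to Theorem \ref{coro 1.2}, one would either compute $Q_n^{(k)}$ directly up to $R_k$ using Theorem \ref{thm2}(iv) or the recurrence \eqref{Q_n}, or note that zeros occur only in the small ranges of Theorem \ref{thm2}.

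\emph{Main obstacle.} Two points need care.
\begin{itemize}
\item \textbf{Precision.} Since $B_k-1$ can be as small as roughly $1.454^{-k^3}$ in the worst case, $\log B_k$ may be tiny. This inflates both the preliminary bound and the reduced bound, so $\arg\gamma_k$ and $B_k$ must be computed to sufficient precision to keep the reduction reliable.
\item \textbf{Degenerate convergents.} If $\mu_k$ is close to an integer combination with $\tau_k$, then $\epsilon$ stays nonpositive for many convergents. In that case I would first try successive convergents and then enlarge $M$.
\end{itemize}
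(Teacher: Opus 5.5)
Your plan is essentially the paper's own argument. You derive a preliminary bound by combining \eqref{eqn2} with the Matveev-type estimate \eqref{eqn5}, pass to the real linear form \eqref{eqn21} using $\gamma_{k-1}=\overline{\gamma_k}$, and then run a Dujella--Peth\H{o} reduction (Lemma~\ref{lem1}) over convergents of $\tau_k$ for each odd $k$. Your separate direct check of $n\le k^3$ is more careful than the paper, which justifies ignoring that range by saying the final bounds exceed $k^3$, even though the smallest reported $R_k=34$ is below $k^3$ for every odd $k\ge 5$. When you run the reduction, note that the conjugation argument gives $\mu_k=2\arg\bigl((2\gamma_k-1)f_k(\gamma_k)\bigr)/\pi$, a product, not the quotient printed in \eqref{eqn10}.
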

\subsection*{Derivation of the Zero-Multiplicity Formula}
In view of Equation \eqref{Q_n}, we get \begin{equation*}
 \begin{split}
Q_{n+1}^{(k)} &=Q_{n+1-k}^{(k)} - Q_{n+2-k}^{(k)} - \cdots - Q_n^{(k)}\\&= Q_{n+1-k}^{(k)} - Q_{n+2-k}^{(k)} - \cdots -Q_{n-1}^{(k)}- \left(Q_{n-k}^{(k)} - Q_{n-(k-1)}^{(k)} - \cdots - Q_{n-1}^{(k)} \right)\\&=2Q_{n-(k-1)}^{(k)} -Q_{n-k}^{(k)}, \hspace{0.1cm}\text{ for all } n\geq k.
\end{split}
\end{equation*}
Therefore,
\begin{equation} \label{eq:negative}
Q_n^{(k)} = 2Q_{n-k}^{(k)} - Q_{n - k - 1}^{(k)}
\end{equation}
is valid for $n \geq k+1$. 
Using the initial conditions of $(Q_n^{(k)})_{n\geq 0},$ we observe that $$ Q_{n-k}^{(k)}=Q_{n - k - 1}^{(k)}=0 
 \text{ for all } 1\leq n-k-1 < n-k\leq k-2.$$ By Equation \eqref{eq:negative}, we get $Q_n^{(k)}=0$
 for all $n\in  [1,k-2].
$
Thus,  $$Q_n^{(k)}=0
 \text{ for all } k+2\leq n-k-1 < n-k\leq 2k-2 .$$
 Consequently, $Q_n^{(k)}=0$
for all $n\in  [1+(k+1),2k-2].
$
Based on this recursive behavior, the set of zero indices is  $$ \mathcal{Z}(L_k) = \bigcup_{j=1}^{r} I_j, \quad \text{where } I_j = [1+(j-1)(k+1), jk-2]\quad \text{and } r=k-2 \text{ for } k \geq 4.$$Summing the cardinality of these $r$ intervals, we obtain the total zero-multiplicity $$ \delta_k = \sum_{j=1}^{r} j = \frac{(k-2)(k-1)}{2}. $$
This theoretical formula is consistent with our empirical data for values up to $k = 500$, as these intervals are constructed through the periodic application of the recurrence relation.
\subsection*{Numerical Verification for $k \in [4, 500]$:}
The combination of analytical parity-based bounds and the Baker-Davenport reduction allows for an exhaustive verification of the zero-set for $k \in [4, 500]$. Python numerical experiments confirm that for every $k$ in this interval, all indices $n$ satisfying $L_{-n}^{(k)} = 0$ fall strictly within the contiguous blocks $I_j$. Specifically, we establish:
\begin{equation}\label{2} \mathcal{Z}(L_k) \supseteq \bigcup_{j=1}^{r} [1+(j-1)(k+1), jk-2]. \end{equation}
 The efficacy of Lemmas \ref{lem4.1} and \ref{lem4.2} ensures that no ``extra" or isolated zeros exist outside these identified intervals for $k\in [4, 500]$. The subsequent sections will address the general proof for $k > 500$.

\section{Matrix Representation and Structural Properties of $(Q_n)$}

 To streamline our analysis, we shall hereafter refer to the sequence $(Q_n^{(k)})$ as $(Q_n)$, where $k$ is
fixed.
The sequence is organized into a specific structural progression: it begins with a rectangular matrix of dimensions $(k-2) \times (k+1)$, which then evolves into a succession of square matrices $k \times k$. This systematic arrangement reveals underlying patterns within $(Q_n)$ that facilitate formal derivation. 

Initially, we represent the first $(k-2)(k+1)$ elements of $(Q_n)$ into a matrix defined by the entries $\bigl(Q_{(i-1)(k+1)+j}\bigr)$ for $1 \le i \le k-2$ and $1 \le j \le k+1$:
\[N_0=
\left(
\begin{array}{llllll}
\boldsymbol{Q}_{1} & \cdots & \boldsymbol{Q}_{k-2} & Q_{k-1} & Q_{k}&Q_{k+1} \\
\boldsymbol{Q}_{(k+1)+1} & \cdots & Q_{(k+1)+k-2} & Q_{(k+1)+k-1} & Q_{(k+1)+k}& Q_{(k+1)+k+1} \\
\boldsymbol{Q}_{2(k+1)+1} & \cdots & Q_{2(k+1)+k-2} & Q_{2(k+1)+k-1} & Q_{2(k+1)+k} & Q_{2(k+1)+k+1}\\
\vdots & \ddots & \vdots & \vdots & \vdots&\vdots \\
\boldsymbol{Q}_{(k-3)(k+1)+1} & \cdots & Q_{(k-3)(k+1)+k-2} & Q_{(k-3)(k+1)+k-1} & Q_{(k-3)(k+1)+k} &Q_{(k-3)(k+1)+k+1}
\end{array}
\right).
\]
An inspection of the bolded terms reveals that the upper triangular portion of this array consists exclusively of zeros. We may therefore rewrite the above matrix in the
following equivalent form:
\begin{equation}\label{zero table}
N_0=\left(
\begin{array}{llllllll}
0 &0& \cdots &0 &0 & Q_{k-1} & Q_{k} & Q_{k+1} \\
0 &0& \cdots &0& Q_{2k-1} & Q_{2k} & Q_{2k+1}&Q_{2k+2} \\
0 &0& \cdots &Q_{3k-1}& Q_{3k} & Q_{3k+1} & Q_{3k+2}&Q_{3k+3} \\
\vdots & \vdots&\ddots & \vdots & \vdots & \vdots  & \vdots & \vdots\\
0 &Q_{(k-2)k-1}& \cdots & Q_{(k-2)k+k-6} &  Q_{(k-2)k+k-5} & Q_{(k-2)k+k-4}& Q_{(k-2)k+k-3}& Q_{(k-2)k+k-2}
\end{array}
\right).
\end{equation}
The behavior of the non-zero diagonals within this configuration is characterized by the following lemma.
\begin{lemma}\label{lucas lemma}
Let $k \ge 2$. Then
\begin{itemize}
    \item[(a)] $Q_{mk-1} = -2^{m-1}$, for all $m \in [1, k-2]$.
    \item[(b)] $Q_{mk} = (m+5)2^{m-2}$, for all $m \in [1, k-2]$.
    \item[(c)] For all $1 \le r < m \le k-2$:
    $$Q_{mk+r} = -\sum_{j=r}^{m-1} 2^{m-1-j} Q_{jk+r-1}.$$
\end{itemize}
\end{lemma}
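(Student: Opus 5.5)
The plan is to work directly with the two-term recurrence \eqref{eq:negative}, $Q_n = 2Q_{n-k} - Q_{n-k-1}$ for $n\ge k+1$, rather than going through Lemma \ref{lemma_associate} and the values of $H_n$ in Lemma \ref{lem2,2}. With $m$ fixed, \eqref{eq:negative} moves down one row of the array \eqref{zero table}, so each diagonal of $N_0$ can be computed by induction on $m$. The needed initial values come from \eqref{Q_n} at $n=k$ together with $Q_0=L_0^{(k)}=2$, $Q_{k-1}=-1$ and $Q_i=0$ for $1\le i\le k-2$; in particular $Q_k = Q_0 - Q_1-\cdots-Q_{k-1} = 3$. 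The vanishing of the entries used below should be taken from the zero blocks $I_j=[1+(j-1)(k+1),\,jk-2]$ obtained in the derivation of the zero-multiplicity formula.

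For (a), apply \eqref{eq:negative} with $n=mk-1$ and $m\ge 2$:
\[
Q_{mk-1} = 2Q_{(m-1)k-1} - Q_{(m-1)k-2}.
\]
The index $(m-1)k-2$ is the right endpoint of the block $I_{m-1}$, so $Q_{(m-1)k-2}=0$ for $2\le m\le k-2$. Hence $Q_{mk-1}=2Q_{(m-1)k-1}$, and induction from $Q_{k-1}=-1$ gives $-2^{m-1}$.

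For (b), write $a_m=Q_{mk}$. Then \eqref{eq:negative} together with (a) gives
\[
a_m = 2a_{m-1} - Q_{(m-1)k-1} = 2a_{m-1} + 2^{m-2}.
\]
With $a_1 = 3 = (1+5)2^{-1}$, the sequence $(m+5)2^{m-2}$ satisfies this recurrence, because $2(m+4)2^{m-3}+2^{m-2}=(m+5)2^{m-2}$. Induction then proves (b).

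For (c), fix $r$ and unroll \eqref{eq:negative} along the column index $r$:
\[
Q_{mk+r} = 2^{m-r}Q_{rk+r} - \sum_{j=r}^{m-1}2^{m-1-j}Q_{jk+r-1}.
\]
This identity follows by a routine induction on $m-r$, and every application of \eqref{eq:negative} is legitimate since all indices involved are at least $k+1$.

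The whole content of (c) is therefore the vanishing of the boundary term $2^{m-r}Q_{rk+r}$, i.e. $Q_{r(k+1)}=0$. This is the step I expect to be the real obstacle, and the first thing I would check is whether $r(k+1)$ actually lies in a zero block $I_j$. It does not: the block $I_{r+1}$ starts at $r(k+1)+1$, one index later. A direct computation confirms the problem. For $k=4$ we get $Q_4=3$ and $Q_5=Q_1-Q_2-Q_3-Q_4=-2$. Then $Q_9=2Q_5-Q_4=-7$, whereas the right-hand side of (c) with $m=2$, $r=1$ is $-Q_4=-3$.

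So (c) seems to hold only up to the correction $2^{m-r}Q_{r(k+1)}$, or with a shifted range of summation. Before attempting a proof I would recheck the intended indexing of (c) against small $k$, for instance against the tabulated zero sets. I would then prove the corrected unrolled identity above, which is valid as stated.
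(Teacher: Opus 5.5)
Your proposal is correct, including the refutation of (c). For (a) and (b) you argue differently from the paper. The paper writes $Q_n=2H_{n-1}-H_n$ (Lemma~\ref{lemma_associate}) and substitutes the values $H_{mk-2}=0$, $H_{mk-1}=2^{m-1}$, $H_{mk}=-(m+1)2^{m-2}$ of the negatively indexed $k$-Fibonacci numbers from Lemma~\ref{lem2,2}. You instead stay inside $(Q_n)$ and induct on $m$ with \eqref{eq:negative}. Your route is self-contained. Its only input is the vanishing of $Q$ on the blocks $I_j$, which you should prove rather than import: the paper's derivation of these blocks is garbled, and \eqref{2} is only checked by computer. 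The proof is one line. If $Q$ vanishes on $I_j$ and $n\in I_{j+1}$, then $n-k$ and $n-k-1$ both lie in $I_j$, so $Q_n=2Q_{n-k}-Q_{n-k-1}=0$.

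Part (c) is indeed false as printed. Your numbers are right: backward recursion gives $L^{(4)}_{-5},\dots,L^{(4)}_{-9}=-2,0,-2,7,-7$, so $Q_9=-7$, while the right-hand side of (c) is $-Q_4=-3$. For $k=5$ and $(m,r)=(2,1)$ one likewise finds $Q_{11}=-7\neq -3=-Q_5$. The paper's proof breaks at the substitution for $H_{mk+r-1}$. Lemma~\ref{lem2,2}(c) with $r-1$ in place of $r$ yields a sum over $j=r-1,\dots,m-1$, not over $j=r,\dots,m-1$; for $r=1$ that part of the lemma does not apply at all. The dropped summand is $-2^{m-r+1}H_{(r-1)(k+1)-1}$. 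Since $H_{s(k+1)-1}=(-1)^s$, this equals $(-1)^r2^{m-r+1}$, which is exactly your boundary term $2^{m-r}Q_{r(k+1)}$.

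Your suggested shift of the summation range is the correct repair. Since $(r-1)(k+1)+1$ is the left endpoint of $I_r$, \eqref{eq:negative} gives $Q_{r(k+1)}=2Q_{(r-1)(k+1)+1}-Q_{(r-1)(k+1)}=-Q_{(r-1)k+r-1}$. So your boundary term is precisely the $j=r-1$ summand, and the unrolled identity becomes
\[
Q_{mk+r}=-\sum_{j=r-1}^{m-1}2^{m-1-j}Q_{jk+r-1},\qquad 1\le r<m\le k-2.
\]
Iterating $Q_{r(k+1)}=-Q_{(r-1)(k+1)}$ from $Q_0=2$ gives $Q_{r(k+1)}=2(-1)^r$. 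Equivalently, (c) lacks the term $(-1)^r2^{m-r+1}$. For $k=4$ the corrected formula gives $-(2Q_0+Q_4)=-7$, as it should. Part (c) is not used later in the paper, and Theorem~\ref{thm2}(iii), which is derived separately, agrees with the corrected values.
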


\textbf{Proof:}
The proof relies on the identity \eqref{associate_identity}.

\textbf{(a)}
For $n = mk-1$, the identity becomes $Q_{mk-1} = 2H_{mk-2} - H_{mk-1}$. From Lemma \ref{lem2,2}, we have $H_{mk-2} = 0$ and $H_{mk-1} = 2^{m-1}$ for all $m \in [1, k-1]$. So, we get
$Q_{mk-1}= -2^{m-1}.$

\textbf{(b)}
For $n = mk$, the identity is $Q_{mk} = 2H_{mk-1} - H_{mk}$. Substituting the values of $H_{mk-1}$ and $H_{mk}$ from Lemma \ref{lem2,2}, we obtain 
$Q_{mk} = 2(2^{m-1}) - (-(m+1)2^{m-2}) 
= (m+5)2^{m-2}.$

\textbf{(c)}
Substituting the values of $H_{mk+r-1}$ and $H_{mk+r}$ from Lemma \ref{lem2,2} in
$Q_{mk+r} = 2H_{mk+r-1} - H_{mk+r}$, 
we observe
\begin{align*}
Q_{mk+r} &= 2\left(-\sum_{j=r}^{m-1} 2^{m-1-j} H_{jk+r-2}\right) - \left(-\sum_{j=r}^{m-1} 2^{m-1-j} H_{jk+r-1}\right) \\
&= \sum_{j=r}^{m-1} 2^{m-1-j} \left[ H_{jk+r-1} - 2H_{jk+r-2} \right].
\end{align*}
From \eqref{associate_identity}, it follows that $H_n - 2H_{n-1} = -Q_n$. So, 
$H_{jk+r-1} - 2H_{jk+r-2} = -Q_{jk+r-1}.$ Substituting this back into the summation$$Q_{mk+r} = \sum_{j=r}^{m-1} 2^{m-1-j} \left( -Q_{jk+r-1} \right) = -\sum_{j=r}^{m-1} 2^{m-1-j} Q_{jk+r-1}.$$
This completes the proof. \hfill$\square$

Matrix \ref{zero table}, which we denote as $N_0$, serves as the initial element of a family of matrices $\{N_b\}_{b\in \mathbb{Z^+}}$ used to categorize the elements of $(Q_n)$. While $N_0$ is rectangular ($(k-2) \times (k+1)$), all subsequent matrices $N_b$ are $k \times k$ square blocks. The first such square matrix, $N_1$, coincides with the non-zero elements of the last row of $N_0$:
\begin{equation}\label{m0 table}
N_1=\left(
\begin{array}{llll}
{Q}_{(k^2-2k-1)+0}  & {Q}_{(k^2-2k-1)+1}  &\cdots &{Q}_{(k^2-2k-1)+k-1}  \\
{Q}_{(k^2-2k-1)+k}  &{Q}_{(k^2-2k-1)+k+1}  & \cdots&{Q}_{(k^2-2k-1)+k+(k-1)}  \\
\vdots  & \vdots & \ddots&\vdots  \\
{Q}_{(k^2-2k-1)+(k-1)k} &{Q}_{(k^2-2k-1)+(k-1)k+1}& \cdots  &{Q}_{(k^2-2k-1)+(k-1)k+(k-1)}
\end{array}
\right).
\end{equation}
In general, all square matrices after matrix \ref{zero table} form the sequence $\{N_b\}_{b\in \mathbb{Z^+}}$, where
\begin{equation}\label{mb table}
N_b=\left(
\begin{array}{llll}
{Q}_{(bk^2-(b+1)k-1)+0}  & {Q}_{(bk^2-(b+1)k-1)+1}  &\cdots &{Q}_{(bk^2-(b+1)k-1)+k-1}  \\
{Q}_{(bk^2-(b+1)k-1)+k}  &{Q}_{(bk^2-(b+1)k-1)+1}  & \cdots&{Q}_{(bk^2-(b+1)k-1)+k+(k-1)}  \\
\vdots  & \vdots & \ddots&\vdots  \\
{Q}_{(bk^2-(b+1)k-1)+(k-1)k} &{Q}_{(bk^2-(b+1)k-1)+(k-1)k+1}& \cdots  &{Q}_{(bk^2-(b+1)k-1)+(k-1)k+(k-1)}
\end{array}
\right).
\end{equation}
The full sequence $(Q_n)_{n \ge 0}$ is thus the union of the entries of all such matrices:$$Q^{(k)}_n = \bigcup_{b \ge 0} \{ v : v \in N_b \}.$$
We introduce the simplified notation
\[
Q_{b,jk+r} = Q_{(bk^2-(b+1)k-1)+jk+r}= Q_{(bk^2-bk-1)+(j-1)k+r},
\]
for all $b \geq 0.$
These matrices exhibit a continuity property where the initial row of $N_b$ is equivalent to the terminal row of $N_{b-1}$. Indeed,
\begin{equation}\label{b,r}
Q_{b,r} = Q_{b-1,(k-2)k+r},
\quad \text{for all } 0 \le r \le k-1 \text{ and } b \ge 1.
\end{equation}
Furthermore, the indices satisfy the relation
\begin{equation}\label{b,-1}
Q_{b,-1} = Q_{b-1,(k-3)k+(k-2)},
\quad \text{for all } b \ge 1.
\end{equation}
In particular, using Equation \eqref{2}, we find
$
Q_{1,-1}
=
Q_{0,(k-3)k+(k-2)}
=
Q_{(k-3)(k+1)+1}
=
0.
$
Furthermore, we observe that the governing linear recurrence \eqref{eq:negative} is preserved within the notation
$Q_{b,n}
=
2Q_{b,n-k} - Q_{b,n-k-1},$
for all $b \ge 1$ and $n \ge k+1$, or for $b=0$ and $n \ge k+2$.
\section{Proof of Theorem \ref{thm2}}
We begin by establishing the following two propositions, which will be fundamental to the proof of Theorem \ref{thm2}
\begin{proposition}\label{prop 1}
 For $k > 2$ and $j, r \in [0, k-2]$, 
 \begin{equation}\label{1,j-1}
 \begin{split}
     Q_{1, (j-1)k+r} &= (-1)^{r+1} \left[ \psi(k+j-3, r-2) 2^{k+j-r-1} + \psi(k+j-3, r-1) 2^{k+j-r-3} \right]  \\&+(-1)^{k+r+1} \left[ \psi(k+j-4, k+r-2) 2^{j-r-2} + \psi(k+j-4, k+r-1) 2^{j-r-4} \right]. \end{split} 
     \end{equation}
\end{proposition}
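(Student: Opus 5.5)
We need a plan for proving Proposition \ref{prop 1}: a closed form for the entries $Q_{1,(j-1)k+r}$ of $N_1$. Note that $Q_{1,(j-1)k+r}=Q_{(k^2-k-1)+(j-1)k+r}=Q_{(k-1)k+(j-1)k+r-1}=Q_{(k+j-2)k+(r-1)}$. So the proposition amounts to an explicit formula for $Q_{mk+s}$ with $m=k+j-2\ge k-2$ and $s=r-1\in[-1,k-3]$. This is just past the triangular range covered by Lemma \ref{lucas lemma} and Theorem \ref{thm2}(iii).

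The plan is a generating-function (equivalently, lattice-path) computation based on the two-term recurrence \eqref{eq:negative}, $Q_n=2Q_{n-k}-Q_{n-k-1}$. Iterating it expresses any $Q_n$ as a combination of the "boundary" values $Q_1,\dots,Q_{k+1}$. Each step moves the index down by $k$ with weight $2$, or down by $k+1$ with weight $-1$. Hence a term reached by $a$ steps of the first kind and $b$ of the second contributes $\binom{a+b}{b}2^a(-1)^b$. The only nonzero boundary values are $Q_{k-1}=-1$, $Q_k=Q_0'$ and $Q_{k+1}$, computed from the initial data: $Q_k=3/2$ from Lemma \ref{lucas lemma}(b) with $m=1$, and $Q_{k+1}$ from the original recurrence \eqref{Q_n}. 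Writing $n=(k+j-2)k+r-1$ and solving $ak+b(k+1)=n-c$ for each boundary index $c\in\{k-1,k,k+1\}$ gives $b\equiv r-1-c \pmod k$.

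Since $0\le r\le k-2$ and $j\le k-2$, the total number of steps is at most about $k+j$. So only two residue solutions survive: $b=b_0$ small (of size about $r$) and $b=b_0+k$. These are exactly the two bracketed groups in \eqref{1,j-1}. The second group carries the sign $(-1)^{k+r+1}$ and the shifted parameters $k+j-4,\,k+r-\cdot$, because $k$ extra $(-1)$-steps replace $k+1$ of the $2$-steps. Within each group, the contributions from $c=k-1$ and $c=k$ (with $c=k+1$ folded in) combine via Pascal's rule. This yields sums of the form $\binom{y}{z}+\binom{y+1}{z+1}=\psi(y,z)$ with powers $2^{m-\cdot}$, exactly as in Theorem \ref{thm2}(iv) with $l=1$.

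Alternatively, and more cleanly, I would simply specialize Theorem \ref{thm2}(iv), which the paper intends to prove using these propositions; however, to avoid circularity I will carry out the path count directly. A cross-check comes from the continuity relations \eqref{b,r} and \eqref{b,-1}, together with the row-$0$ data from Lemma \ref{lucas lemma}.

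The main obstacle is the bookkeeping at the edges. This means verifying the conventions $\psi(y,-1)=1$ and vanishing for $z>y$, which make the formula correct when $r=0,1$ (negative second argument $r-2$) and when $j$ is small, so that $\psi(k+j-4,k+r-2)$ vanishes and the second group disappears. It also means checking that no third residue class $b_0+2k$ can contribute, using $a,b\ge0$ and the bound $a+b\le k+j$. I would first verify the formula numerically for $k=4,5$ against the table of zeros, then handle these boundary cases separately by induction on $j$ using the recurrence in the form $Q_{1,n}=2Q_{1,n-k}-Q_{1,n-k-1}$.
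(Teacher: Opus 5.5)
Your route is correct and genuinely different from the paper's. The paper does no counting. It imports the closed form for $H_{1,jk+r}$ (with $H_n=F^{(k)}_{-n}$) from Lemma~4 of \cite{ggl2024}, shifts $r\mapsto r-1$, combines the two via $Q_n=2H_{n-1}-H_n$ (Lemma~\ref{lemma_associate}), and then shifts $j\mapsto j-1$. That is a two-line transfer, but it rests entirely on the external formula and its index conventions.

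Your path count uses only \eqref{eq:negative} and the initial data, and it actually proves more. In generating-function form it reads $\sum_{n\ge0}Q_nx^n=(2-x^{k-1}-x^k)/(1-2x^k+x^{k+1})$. Set
$$c_{mk+s}=\sum_{i\ge0}(-1)^{ik+s}\binom{m-i}{ik+s}2^{m-i(k+1)-s}\quad\text{for } s\in[-1,k-1],$$
with binomials zero unless $0\le ik+s\le m-i$. Then $Q_n=2c_n-c_{n-k+1}-c_{n-k}$. Pascal's rule turns the $i$-th term of $Q_{mk+s}$ into
\[
(-1)^{ik+s}2^{E-2}\Bigl[8\binom{y}{z-1}+6\binom{y}{z}+\binom{y}{z+1}\Bigr]=(-1)^{ik+s}\bigl[\psi(y,z-1)2^{E}+\psi(y,z)2^{E-2}\bigr],
\]
where $y=m-i-1$, $z=ik+s$ and $E=m-i(k+1)-s$. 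This is Theorem~\ref{thm2}(iv) for every $m$ at once. The proposition is the case $m=k+j-2\le 2k-4$, where only $i\in\{0,1\}$ survive. The edge cases $r\in\{0,1\}$ need no separate induction on $j$, since $\binom{y}{-1}=0$ handles them uniformly.

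There are two things to fix when you write it out.

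First, $Q_k=3$, not $3/2$. Lemma~\ref{lucas lemma}(b) with $m=1$ gives $(1+5)2^{-1}=3$, and directly $Q_k=Q_0-Q_{k-1}=2+1$. Also $Q_{k+1}=-2$. With boundary weights $-1,3,-2$ at $k-1,k,k+1$ the Pascal recombination goes through; with $3/2$ it does not.

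Second, you reduce $Q_{1,(j-1)k+r}$ to $Q_{(k+j-2)k+r-1}$. This is the reading under which the proposition is true: it is the $(j,r)$ entry of $N_1$, and it is the one the paper itself uses when proving Theorem~\ref{thm2}(iv). It does not, however, follow from the displayed definition $Q_{b,jk+r}=Q_{(bk^2-(b+1)k-1)+jk+r}$, which gives $Q_{(k+j-3)k+r-1}$. For example, with $k=4$, $j=1$, $r=0$ the right-hand side of the proposition is $-4=Q_{11}$, whereas that definition gives $Q_{1,0}=Q_7=-2$. State your convention explicitly instead of deriving it from that display.
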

\textbf{Proof:}
From Lemma 4 in \cite{ggl2024}, we find $$H_{1, jk+r} = (-1)^r 2^{k+j-r-2} \psi(k+j-2, r-1) + (-1)^{k+r} 2^{j-r-3} \psi(k+j-3, k+r-1),$$ for $k > 2$ and $j, r \in [0, k-1]$.
 Replacing $r$ with $r-1$, we get $$H_{1, jk+r-1} =  (-1)^{r-1} 2^{k+j-r-1} \psi(k+j-2, r-2) + (-1)^{k+(r-1)} 2^{j-r-2} \psi(k+j-3, k+r-2).$$
Substituting $H_{1, jk+r-1}$ and $H_{1, jk+r}$ into the identity $Q_{1, jk+r} = 2H_{1, jk+r-1} - H_{1, jk+r},$ we get 
\begin{align*}
Q_{1, jk+r} &= (-1)^{r+1} \left[ \psi(k+j-2, r-2) 2^{k+j-r} + \psi(k+j-2, r-1) 2^{k+j-r-2} \right]\\ &+ (-1)^{k+r+1} \left[ \psi(k+j-3, k+r-2) 2^{j-r-1} + \psi(k+j-3, k+r-1) 2^{j-r-3} \right] .   
\end{align*}
Replacing $j$ by $j-1$ in the above expression yields \eqref{1,j-1}. \hfill $\square$

\begin{proposition}\label{prop 2}
For all $k>2;$ $j, r \in [0, k -2]$ and $b \in \mathbb{Z^+}$,
    \begin{equation}\label{b,j-1}
    \begin{split}
        Q_{b, (j-1)k+r} = \sum_{i=0}^{b} (-1)^{ik+r+1} [ \psi(bk + j - b - i - 2, ik + r - 2) 2^{(b-i)k+j-r-b-i}  \\+ \psi(bk + j - b - i - 2, ik + r - 1) 2^{(b-i)k+j-r-b-i-2} ] .
         \end{split}
         \end{equation}
\end{proposition}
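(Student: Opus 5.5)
The plan is to mirror the proof of Proposition \ref{prop 1}: obtain the analogous closed form for $H_{b,jk+r}$ and transfer it to $Q$ through the identity \eqref{associate_identity}, $Q_n=2H_{n-1}-H_n$. The companion paper \cite{ggl2024} gives, for $b\ge1$ and $j,r\in[0,k-1]$,
\[
H_{b,jk+r}=\sum_{i=0}^{b}(-1)^{ik+r}\,\psi(bk+j-b-i-1,\,ik+r-1)\,2^{(b-i)k+j-r-b-i-1}.
\]
This is the general-$b$ version of Lemma 4 of \cite{ggl2024} used above; for $b=1$ it reduces to the formula quoted in the proof of Proposition \ref{prop 1}. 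The first step is to check that the block indexing $Q_{b,n}=Q_{(bk^2-(b+1)k-1)+n}$ coincides with the indexing used for $H_{b,\cdot}$ in \cite{ggl2024}. For $b=1$ this is exactly what was used in Proposition \ref{prop 1}, so I would confirm that the offsets $bk^2-(b+1)k-1$ agree in general.

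The second step shifts $r\mapsto r-1$ to obtain $H_{b,jk+r-1}$ and substitutes into
\[
Q_{b,jk+r}=2H_{b,jk+r-1}-H_{b,jk+r}.
\]
Working termwise in $i$, the sign of the $i$-th term of $2H_{b,jk+r-1}$ is $(-1)^{ik+r-1}=(-1)^{ik+r+1}$, and its power of two becomes $2^{(b-i)k+j-r-b-i+1}$ with binomial part $\psi(bk+j-b-i-1,ik+r-2)$. The $i$-th term of $-H_{b,jk+r}$ also has sign $(-1)^{ik+r+1}$, with $\psi(bk+j-b-i-1,ik+r-1)\,2^{(b-i)k+j-r-b-i-1}$. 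Hence
\[
Q_{b,jk+r}=\sum_{i=0}^{b}(-1)^{ik+r+1}\bigl[\psi(bk+j-b-i-1,ik+r-2)2^{(b-i)k+j-r-b-i+1}+\psi(bk+j-b-i-1,ik+r-1)2^{(b-i)k+j-r-b-i-1}\bigr].
\]
Replacing $j$ by $j-1$ then lowers the first argument of $\psi$ by one and both exponents by one. This gives exactly \eqref{b,j-1}, with $\psi(bk+j-b-i-2,\cdot)$ and exponents $(b-i)k+j-r-b-i$ and $(b-i)k+j-r-b-i-2$. As a sanity check, setting $b=1$ recovers \eqref{1,j-1}.

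The main obstacle is the edge of the index ranges. The shift $j\mapsto j-1$ with $j=0$ and the shift $r\mapsto r-1$ with $r=0$ push the indices outside $[0,k-1]$, where the cited formula for $H$ is stated. I would handle these cases with the continuity relations \eqref{b,r} and \eqref{b,-1}, which identify the out-of-range entries with the last rows of $N_{b-1}$. I would then verify that the conventions $\psi(y,-1)=1$, and $\psi(y,z)=0$ for $z>y\ge0$, make the summand formula agree with those values. If the general-$b$ formula for $H$ is not available verbatim in \cite{ggl2024}, the fallback is induction on $b$. The base case is Proposition \ref{prop 1}. The inductive step uses the recurrence $Q_{b,n}=2Q_{b,n-k}-Q_{b,n-k-1}$ together with \eqref{b,r} to seed row $0$ of $N_b$ from row $k-2$ of $N_{b-1}$. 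Pascal's rule, in the form $\psi(y,z)=\psi(y-1,z)+\psi(y-1,z-1)$ (valid since $\psi$ is a sum of two binomials each satisfying Pascal), then propagates the closed form down the rows. The new $i=b$ term arises from the boundary contributions at $r=0$.
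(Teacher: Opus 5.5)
Your proposal is correct and is essentially the paper's own proof. The paper takes the general-$b$ formula for $H_{b,jk+r}$ from Lemma 5 of \cite{ggl2024}, which is exactly the expression you wrote. It then shifts $r\mapsto r-1$, combines via \eqref{associate_identity} to reach the same intermediate sum with $\psi(bk+j-b-i-1,\cdot)$ and exponents $(b-i)k+j-r-b-i\pm1$, and finally replaces $j$ by $j-1$. Your handling of the boundary cases $j=0$ and $r=0$, where the cited $H$-formula is used outside its stated range, is a refinement the paper skips silently, not a different route.
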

\textbf{Proof:}
    Using the values of $H_{b, jk+r-1}$ and $H_{b, jk+r}$ from Lemma 5 of \cite{ggl2024}, we obtain\begin{align*}Q_{b, jk+r} =\sum_{i=0}^{b}& (-1)^{ik+r+1} [ \psi(v, ik + r - 2) 2^{(b-i)k+j-r-b-i+1} \\ & + \psi(v, ik + r - 1) 2^{(b-i)k+j-r-b-i-1}],\end{align*} where $v = bk + j - b - i - 1$.
   Substituting $j$ by $j-1$ in the above expression yields
\eqref{b,j-1}, thereby completing the proof. \hfill $\square$

\subsection*{Proof of Theorem \ref{thm2}}
The proofs for estimates (i) and (ii) follow directly from the inclusion in \eqref{2} and the properties established in Lemma \ref{lucas lemma}, estimate (a), respectively. The subsequent proofs for the remaining estimates are described below.
\begin{itemize}
    \item [(iii)]
Under the combinatorial notation of $\psi$ defined in \eqref{psi}, we aim to verify the identity
\begin{equation}\label{small_formula}
Q_{mk+r}^{(k)}
=
(-1)^r
\left[
\psi(m-1,r-1)2^{\,m-r}
+
\psi(m-1,r)2^{\,m-r-2}
\right], 
\end{equation}
for all $0\le r<m\le k-2.$
We first establish the corresponding formula for
$H_n^{(k)}$ which is given in \cite{ggl2024}.
For $m\le k-1$, the sequence satisfies the identities
\begin{align*}
&H_{mk+r-1}^{(k)}
=
(-1)^r
\psi(m-1,r-1)
2^{\,m-r-1},\\
\text{and } &H_{mk+r}^{(k)}
=
(-1)^{r+1}
\psi(m-1,r)
2^{\,m-r-2}.
\end{align*}
Applying Lemma~\ref{lemma_associate}, we obtain
$
Q_{mk+r}^{(k)}
=
2H_{mk+r-1}^{(k)}
-
H_{mk+r}^{(k)}.
$
\vspace{0.2cm}Factoring out $(-1)^r$ yields \eqref{small_formula}.
\end{itemize}
\begin{itemize}
\item[(iv)]
The derivation of the structural formula for $Q_{mk+r}^{(k)}$ necessitates a bifurcated analysis. We distinguish between the quadratic case ($k=2$) and the higher-order cases ($k>2$) to account for the unique recurrence behaviors inherent in each dimension.\\
\end{itemize}
{\textbf{Case ($k=2$):}
Here we must prove that for all $m\geq1$,
\begin{equation*}\label{eq:lucas_even_odd}
Q_{2m+r} =
\begin{cases}
\displaystyle
-\sum_{i=0}^{m-1}
\psi(t,2i-2)2^{m-3i+1}
-
\sum_{i=0}^{m-1}
\psi(t,2i-1)\,2^{\,m-3i-1},
 &\text{ if } r=-1, \\[3ex]
\displaystyle
\sum_{i=0}^{m-1}
\psi(t,2i-1)\,2^{\,m-3i}
+
\sum_{i=0}^{m-1}
\psi(t,2i)\,2^{\,m-3i-2},
&\text{ if } r=0,
\end{cases} 
\end{equation*}
where $t=m-i-1.$
Employing the identity
$
Q_{2m-1} = 2H_{2m-2} - H_{2m-1}, 
$
and substituting the known values for $H_{2m-1}$ and $H_{2m-2}$ from \cite{ggl2024}, we obtain
\[
Q_{2m-1}
=
-\sum_{i=0}^{m-2}
\psi(m-i-2,2i)\,2^{\,m-3i-2}
-
\sum_{i=0}^{m-1}
\psi(m-i-1,2i-1)\,2^{\,m-3i-1}.
\]
Applying an index shift $i \to i-1$ to the term $2H_{2m-2}$ yields $$2H_{2m-2} = -\sum_{i=1}^{m-1} \psi(m-i-1, 2i-2) 2^{m-3i+1}.$$Consequently, for $r=-1$, the expression simplifies to
\[
Q_{2m-1}
=
-\sum_{i=0}^{m-1}
\left[\psi(m-i-2,2i-2)\,2^{\,m-3i+1}
+
\psi(m-i-1,2i-1)\,2^{\,m-3i-1}\right].
\]
Similarly, for the case $r=0$, we utilize the relation $Q_{2m} = 2H_{2m-1} - H_{2m}$. Substituting the expressions of $H_{2m-1}$ and $H_{2m}$ from \cite{ggl2024} gives
\begin{align*}
Q_{2m}
&=
2\sum_{i=0}^{m-1}
\psi(m-i-1,2i-1)\,2^{\,m-3i-1}
+
\sum_{i=0}^{m-1}
\psi(m-i-1,2i)\,2^{\,m-3i-2}\\
&= \sum_{i=0}^{m-1}
\left[\psi(m-i-1,2i-1)\,2^{\,m-3i}
+
\psi(m-i-1,2i)\,2^{\,m-3i-2}\right].
\end{align*}
\textbf{Case ($k>2$):}
For higher dimensions, the formulation relies on the structural results established in Propositions \ref{prop 1} and \ref{prop 2}. 

By implementing the substitutions $m = b(k-1)+j-1$ and $r = r-1$ into identity \eqref{b,j-1}, we derive the expression
 \begin{equation}\label{39}
    \begin{split} Q_{mk+r} = Q_{b,(j-1)k+r+1}
= \sum_{i=0}^{b}&(-1)^{ik+r} [ \psi(m-i-1, ik+r-1) 2^{m-i(k+1)-r-1} \\&+ \psi(m-i-1, ik+r) 2^{m-i(k+1)-r-1} ],\end{split}
\end{equation}
for all $k>2$, $j\in [0, k-1]$, $r \in [-1, k-2]$ and $b \in \mathbb{Z}^+$.
To ensure a unique representation and prevent the repetition of sequence elements,  we restrict $j$ to the interval $[0, k-1)$ using identity \eqref{b,r}, where $Q_{b,r+1} = Q_{b-1,(k-2)k+r+1}$. Under the transformation $m = b(k-1) + j - 1$, and given that $0 \le j < k-1$, the parameter $b$ is uniquely determined by the floor function:$$\left\lfloor\frac{m+1}{k-1}\right\rfloor - 1 < b \leq \left\lfloor\frac{m+1}{k-1}\right\rfloor \implies b = \left\lfloor\frac{m+1}{k-1}\right\rfloor.$$Thus, the proof of estimate (iv) is completed by substituting $b = \left\lfloor{(m+1)}/{(k-1)}\right\rfloor$ into \eqref{39}, provided $m \ge k-2$.

This completes the proof of Theorem \ref{thm2}. \hfill $\square$

\section{Proof of Theorem \ref{coro 1.2} for $k>500$.} Having verified the zero-multiplicity for the initial range $k \le 500$, we now extend the proof to all $k > 500$. We proceed by showing that no indices $n = mk+r$ satisfying $m \ge k-2$ can result in $Q_n = 0$. Our analysis is divided into two distinct logical frameworks depending on whether $k$ is even or odd. Furthermore, since the zero-set for the initial blocks is already characterized by Theorem \ref{thm2} estimate (i), and because estimates (ii) and (iii) preclude additional zeros, we restrict our focus to the case where $m \ge k-2$.
\subsection{Case 1 ($k$ is even):} For even $k$, the parity of indices simplifies the alternating sign in the structural formula. Specifically, $(-1)^{ik+r} = (-1)^r$. Substituting this into estimate (iv) of Theorem \ref{thm2} for $m \ge k-2$ and $r \in [-1, k-2]$, we obtain
\begin{equation}\label{even k}
\begin{split}
    Q_{mk+r} = (-1)^r \sum_{i=0}^{l}& [ \psi(m-i-1, ik+r-1) 2^{m-i(k+1)-r} \\
    &+ \psi(m-i-1, ik+r) 2^{m-i(k+1)-r-2} ],\end{split}
    \end{equation}
    where  
$
\ell =
\begin{cases}
m, & \text{if } k=2,\\
\left\lfloor {(m+1)}/{(k-1)} \right\rfloor, & \text{if } k>2.
\end{cases}
$\\
Because $k$ is even, $n = mk+r$ share the same parity as $r$. Analysis of \eqref{even k} reveals that for $n \ge k^2-2k-1$, the sequence terms $Q_n$ are strictly positive for even $n$ and strictly negative for odd $n$. Consequently, no additional zeros exist outside the intervals defined in \eqref{2}. This confirms that for even $k$, the zero multiplicity is precisely $(k-1)(k-2)/2$.
 \subsection{Case 2 (\textit{k} is odd):}
 For odd $k > 500$, the terms in the expansion of $Q_n$ alternate in sign, theoretically allowing for a vanishing sum. We show that this leads to a mathematical impossibility.

\subsubsection{A lower bound for $n$ in terms of $k$:}

Using the structural formula for odd $k$, we simplify $Q_{mk+r}$ by factoring out $2^{m-r-2}$:
\begin{equation*}\label{odd k}
Q_{mk+r} = (-1)^r 2^{m-r-2} \sum_{i=0}^{l} (-1)^{i} \mathcal{X}_i 2^{-i(k+1)} \quad \text{for } l=\left\lfloor {(m+1)}/{(k-1)} \right\rfloor,
\end{equation*}
where $\mathcal{X}_i = 4\psi(m-i-1, ik+r-1) + \psi(m-i-1, ik+r)$.
If we assume $Q_{mk+r} = 0$, then
\begin{equation} \label{vanishing_sum}
\sum_{i=0}^{l} (-1)^{i} \mathcal{X}_i 2^{-i(k+1)} = 0.
\end{equation}
For $r \le k-2 \leq m$, the term in the Equation \eqref{vanishing_sum} corresponds to $i=0$ is nonzero as atleast one of $\psi(m-1, r)$ and $\psi(m-1, r-1)$ is strictly greater than zero. If $i=0$ were the unique index for which $\psi(m-i-1, ik+r)$ did not vanish, Equation \eqref{vanishing_sum} would possess no solutions, immediately resolving the zero-multiplicity problem for the odd $k$ case. Consequently, for a solution to exist, we must assume the presence of at least two non-zero terms in the expansion. Specifically, the lead term ($i=0$) and a subsequent term where $i$ is odd would facilitate cancellation. Defining $l'$ as the supremum of indices $i > 0$ for which the $i$-th term is non-trivial, Equation \eqref{vanishing_sum} can be represented as
\begin{equation} \label{l' sum}
\sum_{i=0}^{l'} (-1)^{i} \mathcal{X}_i 2^{-i(k+1)} = 0,\quad \text{for } 0<l'\leq\left\lfloor {(m+1)}/{(k-1)} \right\rfloor,
\end{equation}
where atleast one of the values $\psi(m -l'-1, l'k+r-1)$ or $\psi(m -l'-1, l'k+r)$ must be non–zero.
Isolating the case $i = l'$ in Equation \eqref{l' sum}, we get
\begin{equation*}
\mathcal{X}_{l'} = \sum_{i=0}^{l'-1} (-1)^{l'+i+1} \mathcal{X}_i 2^{(l'-i)(k+1)}\quad \text{for } 0<l'\leq\left\lfloor {(m+1)}/{(k-1)} \right\rfloor.
\end{equation*}
Since $l'-i \ge 1$ for all terms in the summation, every term on the right-hand side is divisible by $2^{k+1}$. It follows that
\begin{equation} \label{divisibility_condition}
2^{k+1} \mid \mathcal{X}_{l'}.
\end{equation}To evaluate $\nu_2(\mathcal{X}_{l'})$, we utilize the identity proved by Kummer\cite{K1852},
\begin{equation}\label{kumar}
\nu_2(\psi(y,z)) \le 2 \frac{\log y}{\log 2} + 2.
\end{equation}
Because $\mathcal{X}_{l'}$ is a linear combination of such $\psi$ values with row index $y = m-l'-1$, the non-Archimedean triangle inequality ($\nu_2(a+b) \ge \min\{\nu_2(a), \nu_2(b)\}$) ensures that the valuation of the sum remains constrained by the complexity of the row index $y$. Thus, from \eqref{divisibility_condition} and \eqref{kumar}, we obtain
\begin{align*}\label{valuation_inequality}
k+1 &\le \nu_2(\mathcal{X}_{l'}) \le 2 \frac{\log y}{\log 2} + 2,
\end{align*}
which leads to $$ k - 1 \le 2 \frac{\log y}{\log 2}.$$
 Since $y \le n/k$, we conclude that any potential non-trivial zero must satisfy the exponential lower bound\begin{equation} \label{n_lower_bound}n \ge 2^{(k-1)/2}.\end{equation} 
\subsubsection{An upper bound for $n$ in terms of $k$:}

We review the analytic framework established for $H^{(k)}$ to obtain a better upper bound for potential zeros of $Q^{(k)}$, which we then combine with the previously derived lower bound $n \ge 2^{(k-1)/2}$. The general term of the $k$-generalized Lucas sequence at negative indices can be expressed via the Binet formula:\begin{equation}Q_n = g_k(\gamma_1)\gamma_1^{-(n+1)} + \dots + g_k(\gamma_{k-1})\gamma_{k-1}^{-(n+1)} + g_k(\gamma_k)\gamma_k^{-(n+1)}.\end{equation}
If $Q_n = 0$, then from Equation \eqref{eqn3}, \eqref{eqn5} and Lemma \ref{lem6}, we obtain $$-5.2 \cdot 10^{16} \cdot k^9 \log(n + 1)(\log k)^2<\log |\Gamma|< \dfrac{125k(3k+1)}{\left(1 + \dfrac{1}{10k^{9.6}(\pi/e)^k}\right)^{n+1}},$$ which implies $$(n+1)\log\left(1 + \frac{1}{10k^{9.6}(\pi/e)^k}\right)<5.2 \cdot 10^{16} \cdot k^9 \log(n + 1)(\log k)^2.$$
Then $({n}/{\log n})<5.2 \cdot 10^{16} \cdot k^9 (\log k)^2 [11k^{9.6}(\pi/e)^k].$
Therefore, by Lemma \ref{lem2}, \begin{equation}\label{eq45}
n < 1.7 \cdot 10^{17} k^{19.6} (\log k)^3 (\pi/e)^k.
\end{equation}
\subsubsection{Absolute bounds for \textit{n} and \textit{k}:}
Combining \eqref{n_lower_bound} and \eqref{eq45}, we get $${\sqrt{2}}/{(\pi/e)^k}<1.7\sqrt{2} \cdot 10^{17} k^{19.6} (\log k)^3,$$ showing that $k\leq886.$
By Equations \eqref{eqn2} and \eqref{eqn5}, $$5.2 \cdot 10^{16} \cdot k^9 \log(n + 1)(\log k)^2>\log\left|\dfrac{f_k(\gamma_k)}{30}\right|+(n+1)\log\left|\dfrac{\gamma_{k-2}}{\gamma_k}\right|.$$
Thus, using the fact that $k \in [501, 885]$, we obtain computationally that
\begin{equation*}
\log \left| {f_k(\gamma_k)}/{30} \right| \in [-10.48, -10.02], \quad \log \left| {\gamma_{k-2}}/{\gamma_k} \right| \in [1.78 \times 10^{-8}, 1.04 \times 10^{-7}],\end{equation*}and\begin{equation*}5.2 \cdot 10^{16} \times k^9 (\log k)^2 \in [3.92 \times 10^{42}, 7.93 \times 10^{44}],\end{equation*}therefore\begin{equation} \label{Numerical_Trans}7.93 \times 10^{44} \times \log(n + 1) > -10.48 + 1.78 \times 10^{-8} \times (n + 1).\end{equation}By solving Equation \eqref{Numerical_Trans} for $n$, we find that if $Q_n=0$ and $k\in[501,885]$, then $n < 1.1 \times 10^{55}.$
Substituting these into the inequality \eqref{n_lower_bound} we have
$
2^{(k-1)/2}\leq n \leq 1.1 \times 10^{55},$
which leads to $k\leq 366$. 
This result directly contradicts our initial assumption that $k > 500$. As a result, we conclude that no sporadic zeros can occur in this range.
This concludes the proof of Theorem \ref{coro 1.2} for the case of odd $k$. \hfill $\square$

{\bf Data Availability Statements:} Data sharing is not applicable to this article as no datasets were generated or analyzed during the current study.

{\bf Funding:} The authors declare that no funds or grants were received during the preparation of this manuscript.

{\bf Declarations:}

{\bf Conflict of interest:} On behalf of all authors, the corresponding author states that there is no Conflict of interest.

\end{document}